\apptocmd{\sloppy}{\vbadness 10000\relax}{}{}
\def\({\left(}
\def\){\right)}
\def\Cal{\mathcal}
\def\Ree{\operatorname{Re}}
\def\eb{\varepsilon}
\def\R {\mathbb{R}}
\newcommand{\be}{\begin{equation} }
\newcommand{\ee}{\end{equation} }
\def \and{\qquad\text{and}\qquad}
\def\Bbb{\mathbb}
\def\Dt{\partial_t}
\def\Dx{\Delta_x}
\def\({\left(}
\def\){\right)}
\def\eb{\varepsilon}
\def\Cal{\mathcal}
\def\eb{\varepsilon}
\def\R {\mathbb{R}}
\def\<{\left<}
\def\>{\right>}
\def \and{\qquad\text{and}\qquad}
\def\Bbb{\mathbb}
\def\Dt{\partial_t}
\def\Dx{\Delta_x}
\newtheorem{proposition}{Proposition}[section]
\newtheorem{theorem}[proposition]{Theorem}
\newtheorem{corollary}[proposition]{Corollary}
\newtheorem{lemma}[proposition]{Lemma}
\theoremstyle{definition}
\newtheorem{definition}[proposition]{Definition}
\newtheorem{remark}[proposition]{Remark}
\numberwithin{equation}{section}
\def\be{\begin{equation}}
\def\ee{\end{equation}}
\def\bp{\begin{proof}}
\def\ep{\end{proof}}
\def \no#1#2#3 {{\bf #1} (#3), #2.}
\def \eds#1#2#3 {#1, #2, #3.}
\title[Inertial Manifolds for complex Ginzburg-Landau equations]
{Inertial manifolds for  3D complex Ginzburg-Landau equations with periodic boundary conditions}
\author[A. Kostianko, C. Sun,  and  S. Zelik]
{ Anna Kostianko${}^{1,3}$, Chunyou Sun${}^1$, and Sergey Zelik${}^{1,2,4}$}
\address{${}^1$ \phantom{e}School of Mathematics and Statistics, Lanzhou University, Lanzhou  \\ 730000,
P.R. China}
\address{${}^2$ University of Surrey, Department of Mathematics, Guildford, GU2 7XH, United Kingdom.}
\address{${}^3$  Imperial College, London SW7 2AZ, United Kingdom.}
\address{${}^4$ Keldysh Institute of Applied Mathematics, Moscow, Russia.}
\email{aNNa.kostyanko@mail.ru (A. Kostianko)}
\email{sunchy@lzu.edu.cn (C. Sun)}
\email{s.zelik@surrey.ac.uk (S. Zelik)}
\keywords{Inertial manifold, dissipative system, spatial averaging, temporal averaging, Ginzburg-Landau equation}
\subjclass[2010]{35B33, 35B40, 35B42, 35Q30, 76F20}
\thanks{This work is partially supported by  the RSF grant  19-71-30004 (Russia), the NSFC Grants no. 11522109 and 11871169 (China)
 and the Leverhulme grant No. RPG-2021-072 (United Kingdom).\newline\phantom{eggog}\newline\phantom{eggog}}
\begin{document}
 \begin{abstract} We prove the existence of an Inertial Manifold for 3D complex Ginzburg-Landau equation with periodic boundary conditions as well as for more general cross-diffusion system assuming that the dispersive exponent is not vanishing. The result is obtained under the assumption that the parameters of the equation is chosen in such a way that the finite-time blow up of smooth solutions does not take place. For the proof of this result we utilize the method of spatio-temporal averaging recently suggested in \cite{K20}.
\end{abstract}

\maketitle
\tableofcontents
\section{Introduction}\label{s0}
It is believed that the  dynamics generated by a dissipative partial differential equation (PDE) at least in a bounded domain can be described,
up to some "non-essential" transient effects, by finitely many parameters, the so-called {\it order}
parameters in the terminology of I. Prigogine, see \cite{Pri77}. Ideally, it is expected that these
order parameters obey a system
of ordinary differential equations (ODEs) which is called an inertial form (IF) of the initial
dissipative system. Thus, the IF if it exists allows us to reduce the study of the
dynamics generated by PDEs to the study of the corresponding system of ODEs. The dream to understand the nature of turbulence using the ideas and methods of classical dynamics permanently inspires the development of the dynamical theory of dissipative systems during the last 50 years, see \cite{BV92,CV02,Fef06,FP67,F95,Lions1969,R01,T95,T97} and references therein.
\par
However, despite a lot of progress done by prominent researches, the nature of the above mentioned
finite-dimensional reduction and its rigorous justification somehow remains a mystery. Moreover, as recent examples
 and counterexamples show, there are deep obstacles to effective realization of this program, e.g. related
  with the {\it smoothness} of the IF and related finite-dimensional reduction, see \cite{Z14}
  and references therein.
\par
Indeed,  IFs with H\"older continuous vector fields can be obtained under weak assumptions on the considered dissipative system using the concept of a global attractor.  
By definition, a global attractor of a dynamical system (DS) is a compact
  invariant set in the phase space which attracts the images of bounded sets as time tends to infinity.
The main achievement of the attractors theory is that a global attractor  exists for many classes of dissipative PDEs arising in applications  and usually  has
finite Hausdorff and box-counting dimensions, see  \cite{BV92,CV02,R01,T97,MZ08} and references
 therein. The class of
 such systems includes reaction-diffusion and 2D Navier-Stokes systems, pattern formation equations
 (like Cahn-Hilliard or Swift-Hohenberg ones),
  damped wave equations and many others. Then the associated IF  can be constructed via the Man\'e projection theorem, see \cite{R11}
    and references therein. Under this approach the box-counting dimension of the attractor $\Cal A$ is usually interpreted as a number of "degrees of freedom" in the reduced IF. In particular, this explains the permanent interest to various upper and lower bounds for the box-counting dimension of a global attractor.
      \par
      However,  the described scheme leads to a drastic and unacceptable loss of regularity.
      Indeed, on the one hand, it is not clear whether and in what aspects the study of the reduced H\"older continuous IF is simpler than the direct study of the initial smooth PDE and, on the other hand, there is no way in general to increase the regularity of the IF. In a fact, there are natural examples where the box-counting dimension of the attractor is low (e.g, 3), but a Lipschitz IF does not exist. Moreover, the reduced  dynamics on the attractor contains features which can hardly be interpreted as "finite-dimensional" (like limit cycles with super-exponential rate of attraction, traveling waves in Fourier space, etc.),
         see \cite{EKZ13,MPSS93,Rom00,KZ18,Z14} for more details. In these cases, the "finite-dimensionality" obtained
          via Man\'e projections looks artificial and controversial and it seems more natural to
           accept that the dynamics here is infinite-dimensional despite the finiteness
            of box-counting dimension.
\par
An alternative approach to the finite-dimensional reduction problem is based on the concept of an {\it inertial}
  manifold (IM) suggested in \cite{FST88}. Roughly speaking, an IM $\Cal M$ is a sufficiently smooth
   (at least Lipschitz)
   finite-di\-men\-sio\-nal invariant submanifold of the phase space which is normally-hyperbolic
   and exponentially stable. If such an object exists, then the finite-dimensional reduction is
    ideally justified. Indeed, the reduction of the initial PDE to the manifold $\Cal M$ gives us the
     desired IF and the normal hyperbolicity gives us the so-called asymptotic phase or exponential tracking  property which in turn gives us a nice rigorous interpretation in what sense the
      transient features are "non-essential".
\par
Unfortunately, being a sort of a center manifold, an IM requires strong separation of the phase space on slow and fast variables which is usually stated in the form of {\it spectral gap} conditions or/and
 invariant cone properties, see \cite{CL02,CFNT89,FST88,FNST88,Fen72,HGT15,M91,MS89,R94,RT96} and references therein for more details. These spectral gap assumptions give a severe restriction on the class of dissipative systems considered, for instance, even for the simplest class of reaction-diffusion equations, they are satisfied for the case of one spatial variable only, so many important equations including 2D Navier-Stokes equations remain out of reach of this theory. It is also known that the spectral gap conditions are sharp on the level of abstract semilinear equations, so there is almost no hope to go further remaining in the class of abstract functional models. However, the situation becomes better if the concrete class of equations is considered, in particular, the structure of the non-linearity (e.g. it locality) may be helpful for relaxing the spectral gap condition.
 \par
 The most famous example is the so-called spatial averaging method introduced by G. Sell and J. Mallet-Paret, see \cite{M-PS88} (and also \cite{Kwe99} for some extensions) which allowed to construct an IM for a scalar reaction-diffusion equation in 3D with periodic boundary conditions. This  method has been later simplified/extended to a number of other equations, such as Cahn-Hilliard equations, see \cite{KZ15} and  various simplified versions of Navier-Stokes equations, see \cite{K18,GG18,LS20}, see also \cite{FCH} for a unified treatment of these and similar equations. An alternative method which has been initially suggested in \cite{Kwak} in the erroneous construction of an IM for 2D Navier-Stokes equations consists of transforming the initial equations or/and  embedding them to a bigger system of equations for which the spectral gap conditions are satisfied. This method has been recently applied for clarifying the situation with existence or non-existence of IMs for 1D reaction-diffusion-advection problems, see \cite{KZ18,KZ17}.
 \par
 The main aim of the present paper is to verify the existence of an IM for the following cross-diffusion system in 3D: 
 \begin{equation}\label{0.main}
 \Dt u-(1+i\omega)\Dx u=f(u,\bar u)
 \end{equation}
 endowed with periodic boundary conditions.  Here, $u = u_{Re}(t, x) + i u_{Im}(t, x)$ is an unknown complex-valued
function, $\omega\in\R$, $\bar u=u_{Re}-iu_{Im}$ is a complex conjugate function and $f$ is a given smooth function. In the particular case
\begin{equation}\label{0.cGL}
f(u,\bar u)=(1+i\beta)u-(1+i\gamma )u|u|^2
\end{equation}
we end up with the classical Ginzburg–Landau equation (see \cite{DGL,Mie} and references therein for
more details concerning this equation and its physical meaning).
\par
As usual, in order to speak about IMs, we first need to get the dissipativity of the considered PDE. Unofortunately the sharp conditions which guarantee such a dissipativity are not known for system \eqref{0.main} even in the case of Ginzburg-Landau nonlinearity. Moreover there is an evidence that solutions with smooth initial data can blow up in finite time despite the immediate dissipative estimate in $L^2$-norm in the so-called self-focusing case $\omega\gamma<0$, see \cite{budd}, see also \cite{DGL} for a list of sufficient conditions for dissipativity in higher norms. Note also that, due to the dissipative estimate in $L^2$ norm, we have the following dichotomy at least in the case of classical Gingburg-Landau equation: either the problem is globally solvable and dissipative in higher energy norms (e.g. in the $H^2$-norm) or there are solutions with smooth initial data which blow up in finite time. In the first case, we have an absorbing ball in $H^2\subset C$ and can cut-off the nonlinearity outside this ball making it globally bounded.
\par
Since an accurate study of the conditions which guarantee the global well-posedness and dissipativity of problem \eqref{0.main} is out of scope of this paper, we assume from the very beginning that this dissipativity is already established and the corresponding cut-off procedure is done and the modified nonlinearity $f\in C_0^\infty(\Bbb C)$ (we also assume for simplicity that $f$ is $C^\infty$-smooth although the finite smoothness is enough for all our results).   
\par
The main result of the paper is the following theorem.
\begin{theorem} Let $\omega\ne0$ and $f\in C_0^\infty(\Bbb C)$. Then problem \eqref{0.main} possesses an IM of the finite dimension (see section \ref{s3} for the rigorous definition) which is $C^{1+\eb}$-smooth for some $\eb>0$.
\end{theorem}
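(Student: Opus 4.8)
The plan is to realize problem \eqref{0.main} as an abstract semilinear parabolic equation $\Dt u + A u = F(u)$ on the complex Hilbert space $H = L^2(\T^3;\mathbb C)$, regarded as a real Hilbert space so as to accommodate the dependence of $f$ on $\bar u$, where $A = -(1+i\omega)\Dx$ with periodic boundary conditions and $F(u) = f(u,\bar u)$. Since $f\in C_0^\infty$, the Nemytskii operator $F$ is globally bounded together with all its derivatives on the relevant Sobolev scale, and its derivative is the operator of real-linear multiplication $F'(u) = a(x) + b(x)\,\mathsf C$, where $a=\partial_u f(u,\bar u)$, $b=\partial_{\bar u} f(u,\bar u)$, and $\mathsf C$ denotes complex conjugation. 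The operator $A$ has compact resolvent with eigenvalues $(1+i\omega)|m|^2$, $m\in\mathbb Z^3$, all lying on the ray $\arg\la=\arctan\omega$; in particular the real parts $\Ree\la=|m|^2$ govern the parabolic smoothing and the exponential dichotomy, while the imaginary parts $\omega|m|^2$ produce fast temporal oscillations of the high modes that will be exploited below.

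First I would fix a spectral splitting $H = PH \oplus QH$, where $P$ is the spectral projector onto the modes with $|m|^2\le N$ and $Q=1-P$, with $N$ taken from a sequence $N_j\to\infty$ supplied by the number-theoretic spatial-averaging lemma for the $3$-dimensional lattice (there exist arbitrarily large $N$ admitting a band $[N-k,N+k]$, $k=k(N)$, for which the lattice points near the sphere $|m|^2=N$ are suitably separated). The inertial manifold is then sought as a Lipschitz (eventually $C^{1+\eb}$) graph $\M=\{p+\Phi(p):p\in PH\}$ over the finite-dimensional space $PH$, and its existence is reduced, following the spatio-temporal averaging scheme of \cite{K20}, to verifying a strong cone/spectral-gap condition for the variational equation $\Dt v + A v = F'(u(t))v$ along trajectories $u(t)$ on the absorbing set.

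The core of the argument is the analysis of the intermediate-frequency block $Q_{N,k}F'(u)Q_{N,k}$, where $Q_{N,k}$ projects onto the band $|m|^2\in[N-k,N+k]$. Passing to the interaction picture $v\mapsto e^{tA}v$, the matrix element of the multiplicative part $a$ from input mode $l$ to output mode $m$ acquires the phase $e^{it\omega(|m|^2-|l|^2)}$ and the weight $\widehat a(m-l)$, while the anti-linear part $b\,\mathsf C$ acquires the phase $e^{it\omega(|m|^2+|l|^2)}$ and the weight $\widehat b(m+l)$. I would then split the block into three pieces: (i) entries of the multiplicative part with $m-l$ large in $\mathbb Z^3$, which are small because $\widehat a(m-l)$ decays rapidly for the smooth symbol (spatial averaging); (ii) entries with $|m|^2\ne|l|^2$, whose time integrals are small because the oscillatory factor averages to zero — here the hypothesis $\omega\ne0$ is essential, and, crucially, the entire anti-linear $b\,\mathsf C$-part falls into this case since its phase $\omega(|m|^2+|l|^2)$ stays comparable to $\omega N$ and hence never vanishes (temporal averaging); and (iii) the remaining piece, which by the lattice separation reduces to multiplication by the scalar spatial average $\langle a\rangle$ and is removed by a scalar change of the unknown, leaving a genuine spectral gap. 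Combining (i)--(iii) shows that $Q_{N,k}F'(u)Q_{N,k}$ is, up to a controllably small remainder, a scalar operator on the band, which is exactly the input required by the abstract theorem.

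I expect the main obstacle to be the simultaneous control of (i) and (ii): the spatial smallness rests on the number-theoretic separation of lattice points on nearby spheres in $\mathbb Z^3$, while the temporal averaging of the terms with $|m|^2\ne|l|^2$ must be carried out with a time-dependent symbol $F'(u(t))$, so one needs uniform bounds on $\Dt F'(u(t))=F''(u(t))\,\Dt u(t)$ along trajectories — these follow from parabolic smoothing and the standing dissipativity assumption, but integrating the oscillatory estimate by parts against a moving symbol and keeping the error uniformly small across the whole band is the delicate point. Once the cone condition is secured, the passage from a Lipschitz to a $C^{1+\eb}$ manifold is standard: one applies the Lyapunov--Perron / fiber-contraction argument to the equation for $D\Phi$, the room left by the averaging estimates yielding Hölder continuity of the derivative for some $\eb>0$.
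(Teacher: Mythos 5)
Your high-level scheme is the right one and matches the paper's: spatial averaging handles the complex-linear part $a=\partial_uf$, while the anti-linear part $b\,\mathsf C$ is removed by temporal averaging because its phase $\omega(|m|^2+|l|^2)\sim 2\omega N$ never vanishes when $\omega\ne0$ (the paper implements this not by oscillatory integration by parts in the interaction picture but by the near-identity transform \eqref{2.av-tr}, $z_n=v_n+\frac{i}{2\omega(1+|n|^2)}\Cal C_{\bar u}(u(t))\bar v_n$, which is the same normal-form idea). However, there is a genuine gap exactly at the point you flag as "delicate": your claim that the required uniform bound on $\Dt F'(u(t))$ "follows from parabolic smoothing and the standing dissipativity assumption" is false in the setting where it is needed. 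An IM is a graph over \emph{all} of $P_NH$, so trajectories on it have unbounded low-mode part, and the best available bound is $\|\Dt P_Nu\|_H\le C(1+N\|u\|_H)$ — of order $N$, which would destroy the smallness of the averaging error. The paper's remedy is precisely the extra cut-off structure absent from your proposal: the Fourier truncation $W$, the $\theta$-cut-off of the averaged term, and above all the Mallet-Paret--Sell term $T_N(u)$ of \eqref{5.T}, whose derivative produces the good term in the cone inequality that absorbs the $O(N)$ part of $\Dt\Cal C_{\bar u}$ on the set $\|P_Nu(t)\|_{H^1}\ge4\widetilde R$ (Lemmas \ref{Lem1.T} and \ref{Lem1.temp}); this in turn requires the strengthened a priori control \eqref{1.QIMs} of $Q_N\Dt u$, obtained by a bootstrapping argument. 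Without these ingredients your step (ii) does not close.

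A second missing idea — arguably the paper's main novelty — is that because the averaging transform $\Cal Q(u)$ depends on the trajectory, the invariant cones for the original variable $v$ \emph{float}, i.e. depend on the point of the phase space, and the abstract cone-implies-IM theorems you invoke are stated for a fixed cone; pointwise-invariant cones alone do not yield a manifold (the paper recalls the Lorenz system, which has invariant cones but no corresponding 2D invariant manifold). The paper resolves this by proving the strong cone inequality for \emph{finite} differences $u_1-u_2$ (Corollary \ref{Cor2.cone}), which needs the convex-combination version of the time-derivative bound (Lemma \ref{Lem1.temp1}, with a three-case analysis) and the measure estimate \eqref{2.ab} to extract the characteristic functions from $\int_0^1T'(su_1+(1-s)u_2)\,ds$, and then reruns the Lyapunov--Perron/backward boundary-value construction by hand with the floating cones. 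Finally, a smaller but real issue: on $H=L^2$ a Nemytskii operator of a nonaffine $f$ is not Fréchet differentiable (and the $W$-truncated nonlinearity is only Hadamard differentiable), so your "standard fiber-contraction" step for $D\Phi$ cannot use the usual estimate \eqref{3.dif}; the paper replaces it by the weaker \eqref{3.dif-w} combined with the parabolic smoothing estimate \eqref{3.lips} along the manifold to recover $C^{1+\eb}$-smoothness.
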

The proof of this result utilizes the so-called method of spatio-temporal averaging suggested in \cite{K20}
for constructing Bi-Lipschitz Man\'e projectors for the attractors of equations of the form \eqref{0.main}. This method is based on a combination of  the spatial averaging of G. Sell and J. Mallet-Paret (which allows to replace the terms $f'_u(\cdot)v$ and $f'_{\bar u}(\cdot)\bar v$ in the corresponding equation of variations by their spatial averaging $\<f'_u(\cdot)\>v$ and $\<f'_{\bar u}(\cdot)\>\bar v$ respectibely) and the classical temporal averaging of rapid oscillations arising due to
the presence of a dispersive term $i\omega\Dx v$ (which finally allows us to kill the term  $\<f'_{\bar u}(\cdot)\>\bar v$), see section \ref{s3} for more details. 
\par
Note that only spatial averaging is not sufficient here exactly due to the presence of an extra term $\<f'_{\bar u}(\cdot)\>\bar v$ which prevents us to get the desired  dichotomy and construct an IM (as well-known, the spatial averaging method does not work in general for systems, only for scalar equations), so the presence of the second (temporal) averaging is crucial. Moreover, as shown in \cite{Rom00}, the IM may not exist for equation \eqref{0.main} in the case $\omega=0$ when this second averaging is impossible (at least under some natural extra normal hyperbolicity assumptions).
\par
In the present paper we lift the result of \cite{K20} from the level of Bi-Lipschitz Man\'e projectors to IMs by adapting the method of spatio-temporal average to the invariant cone technique. In contrast to the previous works, see \cite{Z14} and references therein, we have to use the so-called "floating" cones which depend on the point of the phase space. Although such a dependence is not surprising for the classical (finite-dimensional) theory of normally-hyperbolic invariant manifolds, to the best of our knowledge it has been never appeared before in the theory of IMs.
\par
To conclude we note that complex Ginzburg-Landau equation is also widespread in the theory of turbulence. In particular, it is used to indicate the difference between the so-called soft and hard turbulence, see \cite{Gib} and references therein. Our result shows that IMs exist in both cases (if the dispersive exponent $\omega$ does not vanish) if the considered system remains dissipative and the finite-time blow up is the only mechanism which may destroy the existencs of IM.
 \par
 The paper is organized as follows. In section \ref{s1} we collect some preliminary results which will be used throughout of what follows. In particular, we introduce and discuss here  mainly following \cite{FCH} and \cite{K18} the further cut-off procedure for the nonlinearity $f$ which is necessary to deal with IMs. Note that this procedure is a bit more delicate than used before since we need to control the time derivatives of solutions in order to do temporal averaging and is actually one of the key technical tools to get an IM. The invariant cones for equation \eqref{0.main} are constructed in section \ref{s2} and the existence of an IM is derived based on these invariant cones in section \ref{s3}. The extra smoothness of the constructed IM is also verified there.

 \section{Notations and preliminaries}\label{s1}
In this section, we introduce useful notations and prepare some technical tools which will be used throughout the paper. We start with the necessary spectral projectors.
\par
We denote by $W^{s,p}(\Bbb T)$ the Sobolev space of distributions on a torus $\Bbb T:=[-\pi,\pi]^3$ whose derivatives up to order $s$ belong to $L^p(\Bbb T)$. In the Hilbert case $p=2$, we use the notation $H^s(\Bbb T):=W^{s,2}(\Bbb T)$, see e.g. \cite{Tri78} for more details.
\par
 The eigenvalues of the Laplacian on the torus are naturally parameterized by triples of natural numbers $n:=(k,l,m)\in\Bbb Z^3$
$$
\lambda_{n}:=|n|^2=k^2+l^2+m^2
$$
with the corresponding eigenvectors $e_n(x):=e^{in.x}$. Then any function $u\in D'(\Bbb T)$ can be presented  as Fourier series:
$$
u(x)=\sum_{n\in\Bbb Z^3}u_n e_n,\ \ u_n:=\frac1{(2\pi)^3}(u,e_n),
$$
where $(u,v):=\int_{\Bbb T}u(x)\bar v(x)\,dx$ is a standard inner product in $L^2(\Bbb T)$. In particular, by Parseval equality,
$$
\|u\|^2_{H^s}=(2\pi)^3\sum_{n\in\Bbb Z^3}(1+|n|^2)^{s}|u_n|^2,\ \ s\in\R.
$$
For every $N\in\Bbb N$ and $0<K<N$, we define the spectral projectors
$$
P_Nu:=\sum_{1+|n|^2\le N}u_ne_n,\ \ Q_Nu:=(1-P_N)u=\sum_{1+|n|^2>N}u_ne_n
$$
as well as
$$
\Cal I_{N,K}u:=\sum_{N-K<1+|n|^2<N+K}u_ne_n,\ \ \Cal P_{N,K}=P_N\circ (1-\Cal I_{N,K}),
$$
and $\Cal Q_{N,K}:=Q_N\circ(1-\Cal I_{N,K})$. Then, keeping in mind the spatial averaging principle, we split any function $u$ in a sum of lower ($u_+:=\Cal P_{N,K}u$),
 intermediate ($u_I:=\Cal I_{N,K}u$) and higher ($u_-:=\Cal Q_{N,K}u$) modes:
 $$
 u=u_++u_I+u_-.
 $$
 We now turn to our complex Ginzburg-Landau equation in the form
 \begin{equation}\label{1.cgl}
 \Dt u+(1+i\omega)Au=f(u,\bar u),\ \ Au:=(1-\Dx)u
 \end{equation}
 on a torus $x\in\Bbb T$. Here $u=u_1(t,x)+iu_2(t,x)$ is an unknown complex valued function,
 $\bar u=u_1-iu_2$ is complex conjugate, $f$ is a given smooth nonlinearity with finite support: $f\in C_0^\infty(\R^2,\R^2)$ and $\omega\in\R$, $\omega\ne0$ is a given number. For simplicity we also assume that $f(0,0)=0$. Thus, we  assume from the very beginning that the initial nonlinearity in complex Ginzburg-Landau equation is cut off outside of the attractor/absorbing set. It is immediate to see that problem \eqref{1.cgl} is well-posed and generates a dissipative semigroup $S(t)$ in $H$. Moreover, for any $s\in\R_+$, there exists an absorbing ball $\Cal B_s:=\{u\in H^s,\ \|u\|_{H^s}\le R_s\}$ for this semigroup. The latter means that for every bounded set $B\subset H$ there exists $T=T(B,s)$ such that
  \begin{equation}
  S(t)B\subset \Cal B_s,\ \ \ \forall t\ge T,
  \end{equation}
  see e.g. \cite{BV92,T95,Z14}.
  \par
  However, this is not enough for constructing an inertial manifold (to be precise, we do not know whether or not \eqref{1.cgl} possesses an IM) and some more tricky cut off of the nonlinearity is necessary. Note that in order to be consistent with equation \eqref{1.cgl}, we only need to guarantee that our cut off procedure does not change the nonlinearity inside of the absorbing ball $B_s$ for some fixed $s$.
   \par
   Following the approach developed in \cite{K18, FCH}, we introduce the truncated function $W(u)$ as follows.
 \par
  Let $\phi\in C^\infty(\Bbb C)$ be such that $\bar \phi( z)=\phi(\bar z)$, $\phi(z)=z$ for $|z|\le 1$ and $\phi(z)=0$ for $|z|\ge 2$.
        Then for a
       given positive constant $C_*$ and sufficiently large exponent $s$, we define the function
       $W: H\to H$ via
       \begin{equation}\label{1.W}
        W(u)=\sum_{n\in\Bbb Z^3} C_*(1+|n|^2)^{-s/2}\phi\(\frac{(1+|n|^2)^{s/2}(u,e_n)}{C_*}\)e_n.
       \end{equation}
The elementary properties of this truncation function are collected in the following proposition.
\begin{proposition}\label{Prop1.W} Let the function $W$ be defined via \eqref{1.W}. Then,
\par
1. The map $W$ is bounded and continuous as a map from $H$ to $H^{s_0}$, where $s_0>0$ is such that
\begin{equation}\label{1.sm-eat}
\sum_{n\in\Bbb Z^3}(1+|n|^2)^{s_0-s}<\infty.
\end{equation}
\par
2. $W(u)\equiv u$ if $u\in H^s$ and $\|u\|_{H^s}\le C_*^2$.
\par
3. The function $W$
 is Hadamard differentiable as a map from $H$ to $H$ and the derivative is given by
\begin{equation}\label{1.Wdif}
W'(u)v=\sum_{n\in\Bbb Z^3} \phi'\((1+|n|^2)^{s/2}\frac{(u,e_n)}{C_*}\)(v,e_n)e_n.
\end{equation}
4. There exists a positive constant $C$ such that, for every $\kappa\in \R$
\begin{equation}\label{1.W-sm}
\|W'(u)\|_{\Cal L(H^\kappa,H^\kappa)}\le C,\  \ \|W'(u_1)-W'(u_2)\|_{\Cal L(H^\kappa,H^\kappa)}\le C\|u_1-u_2\|_{H^{s}}.
\end{equation}
for all $u,u_1,u_2\in H^s$.
\end{proposition}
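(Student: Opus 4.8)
The plan is to treat $W$ as a diagonal (Nemytskii-type) operator acting mode-by-mode in the Fourier basis. For each $n\in\Bbb Z^3$ set
\[
\Phi_n(\zeta):=C_*(1+|n|^2)^{-s/2}\phi\left(\frac{(1+|n|^2)^{s/2}\zeta}{C_*}\right),\qquad \zeta\in\Bbb C,
\]
so that $W(u)=\sum_n\Phi_n((u,e_n))\,e_n$, and observe that each $\Phi_n$ is a smooth self-map of $\Bbb C\cong\R^2$ whose derivative carries no growing prefactor: the two scalings cancel and $D\Phi_n(\zeta)=D\phi\big((1+|n|^2)^{s/2}\zeta/C_*\big)$, a real-linear map of norm $\le\|D\phi\|_\infty=:L$ uniformly in $n$ and $\zeta$. (Since $\phi$ agrees with $z$ on a ball and vanishes outside a larger one it cannot be holomorphic, so the symbol $\phi'$ in \eqref{1.Wdif} must be read as this real-linear derivative.) Claim 1 is then immediate: $|\phi|\le M$ gives $|\Phi_n(\zeta)|\le C_*M(1+|n|^2)^{-s/2}$, hence $\|W(u)\|_{H^{s_0}}^2\lesssim\sum_n(1+|n|^2)^{s_0-s}<\infty$ by \eqref{1.sm-eat}, uniformly in $u$; continuity from $H$ to $H^{s_0}$ follows by splitting into low modes $|n|\le R$ (finitely many, each depending Lipschitz-continuously on $(u,e_n)$ since $\phi$ is Lipschitz) and a high-mode tail controlled by the same convergent series. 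Claim 2 is a direct computation: the smallness condition forces, through Parseval's identity, every argument $(1+|n|^2)^{s/2}(u,e_n)/C_*$ into the unit ball $\{|z|\le1\}$ where $\phi(z)=z$, so $\Phi_n((u,e_n))=(u,e_n)$ for all $n$ and $W(u)=u$.

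For Claim 3 I would first justify \eqref{1.Wdif} by the per-mode chain rule, $W'(u)v=\sum_nD\phi\big((1+|n|^2)^{s/2}(u,e_n)/C_*\big)[(v,e_n)]\,e_n$, which defines a bounded operator on $H$ since $\|W'(u)v\|\le L\|v\|$. The point requiring care is that $W$ is only Hadamard, not Fréchet, differentiable: the second derivatives $D^2\Phi_n$ grow like $(1+|n|^2)^{s/2}$, so the naive Taylor remainder is not $o(\|v\|)$ uniformly over modes. I would therefore avoid any estimate using $D^2\phi$ here. Given $t_k\to0$ and $v_k\to v$ in $H$, write the $n$-th coefficient of $t_k^{-1}(W(u+t_kv_k)-W(u))-W'(u)v$ as $\int_0^1\big(D\Phi_n(a+\theta t_kb_k)-D\Phi_n(a)\big)[b_k]\,d\theta+D\Phi_n(a)[b_k-(v,e_n)]$, with $a=(u,e_n)$ and $b_k=(v_k,e_n)$. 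The second piece contributes at most $\sim L^2\|v_k-v\|^2\to0$. For the first piece I use the uniform bound $\le2L|b_k|$ together with per-mode convergence (continuity of $D\phi$, $t_k\to0$, $b_k\to(v,e_n)$ give that each fixed term tends to $0$); a tightness argument, in which the tail $\sum_{|n|>R}$ is bounded by $\lesssim\sum_{|n|>R}|(v,e_n)|^2+\|v_k-v\|^2$ and is uniformly small for $R$ large, then upgrades pointwise convergence to convergence in $H$. This is the step I expect to be the main obstacle, precisely because Fréchet differentiability genuinely fails and one must lean on the uniform $L^\infty$-bound on $D\phi$ rather than on a Lipschitz/Taylor estimate.

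Claim 4 exploits that $W'(u)$ is block-diagonal in the modes, the $n$-th block being the real-linear map $D\phi((1+|n|^2)^{s/2}(u,e_n)/C_*)$ of norm $\le L$; since such a multiplier commutes with the weights $(1+|n|^2)^\kappa$, the first estimate in \eqref{1.W-sm} follows at once from $\|W'(u)v\|_{H^\kappa}^2\sim\sum_n(1+|n|^2)^\kappa|D\phi(\cdots)[(v,e_n)]|^2\le L^2\|v\|_{H^\kappa}^2$, uniformly in $u$ and $\kappa$. For the Lipschitz estimate I would bound the per-mode difference through the Lipschitz constant $L_2:=\|D^2\phi\|_\infty$ of $D\phi$, namely $|D\phi(\mathrm{arg}_n^1)-D\phi(\mathrm{arg}_n^2)|\le L_2(1+|n|^2)^{s/2}|(u_1-u_2,e_n)|/C_*$, and absorb the growing factor into the $H^s$-norm via the embedding $\sup_n(1+|n|^2)^{s/2}|(u_1-u_2,e_n)|\le c\|u_1-u_2\|_{H^s}$. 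Pulling out this supremum leaves $\sum_n(1+|n|^2)^\kappa|(v,e_n)|^2\sim\|v\|_{H^\kappa}^2$, so $\|(W'(u_1)-W'(u_2))v\|_{H^\kappa}\lesssim C_*^{-1}L_2\,\|u_1-u_2\|_{H^s}\|v\|_{H^\kappa}$, which is the desired bound with a constant independent of $\kappa$.
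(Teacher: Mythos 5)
The paper itself gives no proof of Proposition \ref{Prop1.W}, deferring to \cite{FCH} (see also \cite{K18}), and your proposal reconstructs essentially that same argument: the diagonal mode-by-mode structure with the scaling cancellation $D\Phi_n=D\phi(\cdot)$, boundedness into $H^{s_0}$ via \eqref{1.sm-eat}, Hadamard (but genuinely not Fr\'echet) differentiability handled by the finite-block-plus-uniform-tail scheme that avoids the $(1+|n|^2)^{s/2}$ growth of $D^2\Phi_n$, and the Lipschitz bound in \eqref{1.W-sm} obtained by absorbing exactly that growth through $\sup_n(1+|n|^2)^{s/2}|(u_1-u_2,e_n)|\le c\|u_1-u_2\|_{H^s}$. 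Your reading of $\phi'$ in \eqref{1.Wdif} as the real-linear derivative of the non-holomorphic $\phi$ is the intended one, so the proof is correct and takes the same route as the paper's cited source.
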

The proof of this proposition is given in \cite{FCH}, see also \cite{K18}.
\par
From now on we fix some $\frac32<s_0<2$, $s_0+\frac32<s<4$ and take  $C_*:=\sqrt{R_s}$ in order to guarantee that $W(u)=u$ for all $u\in\Cal B_s$. In addition, we will have that $W(u)\in H^{s_0}\subset C(\Bbb T)$ due to the Sobolev embedding.
\par
The basic idea is to replace $f(u,\bar u)$ by $f(W(u),\bar W(u))$ in equation \eqref{1.cgl},  but analogously to \cite{M-PS88} and \cite{FCH}, we need some extra terms to implement the spatial averaging method.
Namely, following  \cite{M-PS88}, we introduce a cut-off function $\varphi(z)$ which equals to $0$ for $z\le R_1^2$ (where $R_1$ is the radius of the absorbing ball $\Cal B_1$)
 and equals to $-1/2$ if
  $z\ge \widetilde R^2$ for some $\widetilde R>R_1$. Then, we define the map $T=T_N:H\to H$ via
\begin{equation}\label{5.T}
T(u):=\varphi(\|A^{1/2}P_Nu\|^2_H)A^{1/2}P_Nu.
\end{equation}
The key property of this map is stated in the following lemma.
\begin{lemma}\label{Lem1.T} It is possible to fix the cut-off function $\varphi$ in such a way that
\begin{equation}\label{1.T-good}
(T'(u)v,v)\le \frac12\((N^{1/2}-A^{1/2})v,v\),\ \ v\in P_NH
\end{equation}
and $(T'(u)v,v)=-\frac12\|P_Nv\|^2_{H^{1/2}}$ if $\|P_Nu\|_{H^1}\ge \widetilde R$.
\end{lemma}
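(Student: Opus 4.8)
The plan is to compute $T'(u)$ explicitly and to reduce \eqref{1.T-good} to a one--dimensional inequality on the slope of $\varphi$. Throughout I regard $H$ as a real Hilbert space and let $(\cdot,\cdot)$ denote its real inner product, so that all the scalars appearing below are real. Set $a:=A^{1/2}P_Nu$ and $g:=\|a\|^2=\|P_Nu\|^2_{H^1}$, so that $T(u)=\varphi(g)a$. Differentiating by the product rule (the derivative of $g$ in the direction $v$ being $2(AP_Nu,P_Nv)$) gives
\[
T'(u)v=2\varphi'(g)\,(AP_Nu,P_Nv)\,a+\varphi(g)A^{1/2}P_Nv .
\]
Pairing with $v\in P_NH$ (so that $P_Nv=v$) and abbreviating $p:=(a,v)$ and $q:=(A^{1/2}a,v)=(AP_Nu,v)$, I arrive at
\be\label{plan.key}
(T'(u)v,v)=2\varphi'(g)\,pq+\varphi(g)\|v\|^2_{H^{1/2}} .
\ee

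Since $\varphi$ is nonincreasing from $0$ to $-\tfrac12$ we have $\varphi\le0$ and $\varphi'\le0$. Hence $\varphi(g)\|v\|^2_{H^{1/2}}\le0$, and the cross term in \eqref{plan.key} is nonpositive as soon as $pq\ge0$; in that case \eqref{1.T-good} holds trivially because $A^{1/2}\le N^{1/2}$ on $P_NH$ makes the right--hand side nonnegative. Everything therefore reduces to the regime $pq<0$, where the cross term is positive and must be dominated by $\tfrac12(Dv,v)$, with $D:=N^{1/2}-A^{1/2}\ge0$ on $P_NH$.

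The crucial point is that $D$ annihilates the top modes, which produces a hidden cancellation exactly when $pq<0$. Writing $A^{1/2}=N^{1/2}-D$ gives $q=N^{1/2}p-(a,Dv)$, so $pq=N^{1/2}p^2-p\,(a,Dv)$. If $pq<0$ then $p$ and $(a,Dv)$ share the same sign and $|(a,Dv)|>N^{1/2}|p|$, i.e. $|p|<N^{-1/2}|(a,Dv)|$. Combining this with Cauchy--Schwarz and the elementary spectral bound $\|Dv\|^2=(D^2v,v)\le N^{1/2}(Dv,v)$ (the eigenvalues of $D$ on $P_NH$ lie in $[0,N^{1/2}]$), I obtain
\[
|pq|\le|p|\,|(a,Dv)|\le N^{-1/2}(a,Dv)^2\le N^{-1/2}\|a\|^2\|Dv\|^2\le g\,(Dv,v).
\]
Consequently $2\varphi'(g)\,pq=2|\varphi'(g)|\,|pq|\le 2|\varphi'(g)|\,g\,(Dv,v)$, and \eqref{1.T-good} follows provided the cut--off is chosen so that $|\varphi'(z)|\,z\le\tfrac14$ for every $z$.

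It remains to check that such a cut--off exists and to treat the saturated region. The slope constraint permits a total decrease of $\int_{R_1^2}^{\widetilde R^2}\frac{dz}{4z}=\tfrac12\ln(\widetilde R/R_1)$ across the transition layer, so it suffices to enlarge $\widetilde R$ so that $\widetilde R\ge eR_1$ in order to accommodate the required drop of $\tfrac12$ from $0$ to $-\tfrac12$; one then fixes any smooth nonincreasing $\varphi$ equal to $0$ on $[0,R_1^2]$, equal to $-\tfrac12$ on $[\widetilde R^2,\infty)$, and with $|\varphi'(z)|\le1/(4z)$ in between. Finally, when $\|P_Nu\|_{H^1}\ge\widetilde R$, i.e. $g\ge\widetilde R^2$, we have $\varphi(g)=-\tfrac12$ and $\varphi'(g)=0$, so \eqref{plan.key} collapses to $(T'(u)v,v)=-\tfrac12\|v\|^2_{H^{1/2}}=-\tfrac12\|P_Nv\|^2_{H^{1/2}}$, the claimed identity. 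I expect the estimate in the regime $pq<0$ to be the main obstacle: the naive bound $|pq|\le gN^{1/2}\|v\|^2$ is useless near the top modes, where $\tfrac12(Dv,v)$ degenerates to $0$, and it is only the sign information $pq<0$ (which upgrades $|p|$ to $N^{-1/2}|(a,Dv)|$) together with $D^2\le N^{1/2}D$ that restores the correct quadratic control by $(Dv,v)$.
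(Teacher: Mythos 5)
Your proof is correct, and it is worth pointing out that the paper itself contains no argument for this lemma: it simply refers to \cite{M-PS88} (see also \cite{Z14}). What you have produced is thus a self-contained verification, and it runs on exactly the ingredients the classical construction rests on. Your derivative formula agrees with the one the paper displays right after the lemma; the identity $(T'(u)v,v)=2\varphi'(g)pq+\varphi(g)\|v\|^2_{H^{1/2}}$ with $g=\|P_Nu\|^2_{H^1}$, $p=(a,v)$, $q=(A^{1/2}a,v)$ is right; the case $pq\ge0$ is indeed trivial since $\varphi\le0$, $\varphi'\le0$ and $N^{1/2}-A^{1/2}\ge0$ on $P_NH$; and in the case $pq<0$ your chain is valid: writing $D:=N^{1/2}-A^{1/2}$ gives $q=N^{1/2}p-(a,Dv)$, so $pq<0$ forces $N^{1/2}|p|<|(a,Dv)|$ and $|pq|\le p(a,Dv)\le N^{-1/2}(a,Dv)^2\le N^{-1/2}g\|Dv\|^2\le g(Dv,v)$, the last step because the spectrum of $D$ on $P_NH$ lies in $[0,N^{1/2}-1]$, whence $D^2\le N^{1/2}D$. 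The slope condition $|\varphi'(z)|\,z\le\frac14$ then yields \eqref{1.T-good}, and, crucially, it does so with $\widetilde R$ independent of $N$, which the paper needs later (e.g. in Lemma \ref{Lem1.temp}, where the constants must not depend on $N$). Your closing remark correctly identifies the heart of the matter: the naive bound $|pq|\le gN^{1/2}\|v\|_H^2$ is useless where $(Dv,v)$ degenerates, and only the sign dichotomy restores quadratic control by $(Dv,v)$.

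Two minor touch-ups, neither a genuine gap. First, $\widetilde R\ge eR_1$ should be strict, $\widetilde R>eR_1$: with equality the admissible total decrease $\frac12\ln(\widetilde R/R_1)$ equals exactly $\frac12$, which a smooth $\varphi$ cannot exhaust, since $|\varphi'(z)|$ must fall strictly below $\frac1{4z}$ near the endpoints of the transition layer; any fixed $\widetilde R>eR_1$ works, and nothing in the paper bounds $\widetilde R$ from above, so the enlargement is legitimate. Second, the saturated identity is stated in the lemma with $\|P_Nv\|^2_{H^{1/2}}$, i.e.\ for arbitrary $v\in H$; this follows from your computation for $v\in P_NH$ because $T'(u)v$ lies in $P_NH$ and depends on $v$ only through $P_Nv$, so that $(T'(u)v,v)=(T'(u)P_Nv,P_Nv)$.
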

The proof of this lemma is given in \cite{M-PS88} (see also \cite{Z14}).
\par
Note that
\begin{multline*}
T'(u)v=\varphi(\|A^{1/2}P_Nu\|^2_{H})A^{1/2}v+\\+2\varphi'(\|A^{1/2}P_Nu\|^2_H)\Ree(A^{1/2}u,A^{1/2}v)A^{1/2}P_Nu
\end{multline*}
for $v\in P_NH$ and the norm of the operator $T'(u)$
depends on $N$. Namely, it is not difficult to show using the fact that $\varphi'$ has a finite support that
\begin{equation}\label{1.T-est}
\|T'(u)v\|_{H}\le C N^{1/2}\|v\|_H,
\end{equation}
where the constant $C$ is independent of $u$.
\par
We also introduce the spatial average operator $a(u)$ as follows:
\begin{multline}\label{1.a-a}
a(u)v:=a_u(u)v+a_{\bar u}(u)\bar v,\\ a_u(u):=\<f'_u(u(\cdot),\bar u(\cdot))\>,\ \
 a_{\bar u}(u):=\<f'_{\bar u}(u(\cdot),\bar u(\cdot))\>,
\end{multline}
where $\<w\>:=\frac1{(2\pi)^3}\int_{\Bbb T}w(x)\,dx$ is the mean value of $w$ on a torus.
\par
Finally, we fix one more smooth cut-off function $\theta(z)$ which equals to one if $z\le  R_0^2$ and
 zero if $z>4R_0^2$, where $R_0$ is the radius of the absorbing ball $\Cal B_0$ in $H$ and define
\begin{multline}\label{1.comp}
F(u):=f(W(u),\bar W(u))-a(W(u))W(u)+\\+\theta(\|u\|^2_H)a(W(u))u-T_N(u).
\end{multline}
Then, due to Proposition \ref{Prop1.W}, the function $F$ will be bounded and continuous as the map
  from $H$ to $H$ and its Gateaux derivative will have the form
  \begin{multline}\label{1.derder}
F'(u)v=[f'_u(W(u),W(\bar u))W'(u)v+f'_{\bar u}(W(u),W(\bar u))W'(\bar u)\bar v-\\-a_u(W(u))W'(u)v-a_{\bar u}(W(u))W'(\bar u)\bar v]+\\+[\theta(\|u\|^2_H)(a_u(W(u))v+a_{\bar u}(W(u))\bar v)]-\\-a'(W(u))[W'(u)v,W(u)]+
[2\theta'(\|u\|^2_H)(u,v)a(W(u))+\\+\theta(\|u\|^2_H)(a'(W(u)),W'(u)v)]u-T'_N(u)v=\\=
l_1(u)v+l_2(u)v+l_3(u)v+l_4(u)v-T'_N(u)v.
  \end{multline}
  Indeed, the verification of \eqref{1.derder} is straightforward (see \cite{FCH}), so we left the details to the reader.
   \par
Note that only the term $T_N(u)$ depends explicitly on $N$ now, so the norms of all other terms
 are independent of $N$. In particular, since $Q_NT_N(u)\equiv 0$, we have that
 \begin{equation}\label{1.q-est}
\|Q_NF(u)\|_{H}\le C,\ \ u\in H,
 \end{equation}
 where $C$ is independent of $N$.
 \par
 Thus, we replace equation \eqref{1.cgl} by the modified one
 \begin{equation}\label{1.cglm}
 \Dt u+(1+i\omega)Au=F(u),
 \end{equation}
where $F(u)$ is defined by  \eqref{1.comp} and will construct an IM for this equation only. The construction of the non-local nonlinearity $F(u)$ guarantees that $F(u)=f(u,\bar u)$ if $u$ belongs to the absorbing set $\Cal B_0\cap\Cal B_1\cap\Cal B_s$, so such a manifold will capture all the nontrivial limit dynamics of \eqref{1.cgl}.
 \par
 To conclude this section, we prepare some more technical tools which will be used later.
 \begin{lemma}\label{Lem5.Q} Let the estimate \eqref{1.q-est} hold. Then, for any $\kappa>0$, the $Q_N$-component of
the solution $u(t)$ of problem \eqref{1.cglm} possesses the following estimate:
\begin{equation}\label{1.q-dis}
\|Q_Nu(t)\|_{H^{2-\kappa}}\le C_1\frac{1+t^M}{t^M}e^{-\beta t}\|Q_Nu(0)\|_{H^{-\gamma}}+C_2,
\end{equation}
where the constants $M,\beta,\gamma>0$ and $C_1,C_2$ are independent of $N$ and $u$ (but may depend on $\kappa$).
\end{lemma}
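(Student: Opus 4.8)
The plan is to read \eqref{1.q-dis} as a standard parabolic smoothing estimate for the linear equation satisfied by the high modes, the whole subtlety being to keep every constant independent of $N$. First I would apply $Q_N$ to \eqref{1.cglm}; since $Q_N$ commutes with $\Dt$ and with $A$, the function $w(t):=Q_Nu(t)$ solves the linear inhomogeneous equation
\[
\Dt w+(1+i\omega)Aw=g(t),\qquad g(t):=Q_NF(u(t)),
\]
where, by \eqref{1.q-est}, the forcing is uniformly bounded, $\|g(t)\|_H\le C$ with $C$ independent of $N$. The operator $-(1+i\omega)A$ is diagonal in the Fourier basis with eigenvalues $-(1+i\omega)\lambda_n$, $\lambda_n=1+|n|^2$, whose real parts $-\lambda_n$ are negative; hence it generates an analytic semigroup on $Q_NH$ and I may write the Duhamel formula
\[
w(t)=e^{-(1+i\omega)At}w(0)+\int_0^te^{-(1+i\omega)A(t-\tau)}g(\tau)\,d\tau .
\]

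The engine of the proof is the scalar smoothing bound. Because $e^{-(1+i\omega)At}$ acts on the mode $e_n$ through the scalar $e^{-(1+i\omega)\lambda_nt}$ of modulus $e^{-\lambda_nt}$, Parseval's identity together with $\|v\|_{H^s}=\|A^{s/2}v\|_H$ gives, for every $\alpha\ge0$ and $v\in Q_NH$,
\[
\|A^\alpha e^{-(1+i\omega)At}v\|_H\le\Big(\sup_{\lambda\ge1}\lambda^\alpha e^{-\lambda t}\Big)\|v\|_H .
\]
Splitting $\lambda^\alpha e^{-\lambda t}=\big(\lambda^\alpha e^{-\lambda t/2}\big)\,e^{-\lambda t/2}$, using $\sup_{\lambda}\lambda^\alpha e^{-\lambda t/2}\le C_\alpha t^{-\alpha}$ and $e^{-\lambda t/2}\le e^{-t/2}$ for $\lambda\ge1$ (which holds for the full lattice since $\lambda_n\ge1$), I obtain
\[
\|A^\alpha e^{-(1+i\omega)At}\|_{\Cal L(Q_NH)}\le C_\alpha t^{-\alpha}e^{-\beta t},\qquad \beta:=\tfrac12,
\]
with $C_\alpha$ independent of $N$. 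This uniformity is automatic here precisely because the semigroup is diagonal and the bound depends only on the lower spectral bound $\lambda_n\ge1$; restricting to $Q_NH$, i.e. to larger eigenvalues, can only improve the decay.

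Finally I would insert this estimate into the Duhamel formula. For the homogeneous term I factor $w(0)=A^{-\gamma/2}\big(A^{\gamma/2}w(0)\big)$ and commute $A^{(2-\kappa)/2}$ through the semigroup, so that with $M:=(2-\kappa+\gamma)/2$ (for any fixed $\gamma>0$) I get $\|e^{-(1+i\omega)At}w(0)\|_{H^{2-\kappa}}\le C_1t^{-M}e^{-\beta t}\|Q_Nu(0)\|_{H^{-\gamma}}$, which reproduces the first term of \eqref{1.q-dis} after the harmless majorization $t^{-M}e^{-\beta t}\le\frac{1+t^M}{t^M}e^{-\beta t}$. For the inhomogeneous term the bound $\|g(\tau)\|_H\le C$ yields
\[
\Big\|\int_0^te^{-(1+i\omega)A(t-\tau)}g(\tau)\,d\tau\Big\|_{H^{2-\kappa}}\le C\int_0^t(t-\tau)^{-(2-\kappa)/2}e^{-\beta(t-\tau)}\,d\tau\le C\int_0^\infty\sigma^{-(2-\kappa)/2}e^{-\beta\sigma}\,d\sigma=:C_2 .
\]
Adding the two contributions gives \eqref{1.q-dis}. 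The two points that genuinely need care are the uniformity of $C_1,C_2,M,\beta,\gamma$ in $N$ (settled by the diagonal structure above) and the finiteness of the last integral at its singular endpoint; the latter is exactly why the hypothesis $\kappa>0$ is imposed, as it forces the smoothing exponent $(2-\kappa)/2<1$.
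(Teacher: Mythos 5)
Your proof is correct and is essentially the paper's own argument: the paper disposes of this lemma in one line by invoking parabolic regularity estimates applied to equation \eqref{1.par-reg} (citing \cite{FCH}), and your Duhamel formula combined with the diagonal smoothing bound $\|A^\alpha e^{-(1+i\omega)At}\|_{\Cal L(Q_NH)}\le C_\alpha t^{-\alpha}e^{-t/2}$ is precisely that standard estimate spelled out. You also correctly identify the two delicate points — the $N$-uniformity coming from the spectral lower bound $\lambda_n\ge1$ and the role of $\kappa>0$ in making the Duhamel integral converge — so nothing is missing.
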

 Indeed, estimate \eqref{1.q-dis} follows from \eqref{1.q-est} and the parabolic regularity estimates applied to the equation
\begin{equation}\label{1.par-reg}
\Dt Q_Nu+(1+i\omega)AQ_Nu=Q_NF(u),
\end{equation}
see \cite{FCH}.
 \par
 As usual, estimate \eqref{1.q-dis} allows us to verify the strong cone property (which is the crucial step in the construction of an IM)  for the trajectories $u(t)$ satisfying
\begin{equation}\label{1.QIM}
\|Q_Nu(t)\|_{H^{2-\kappa}}\le C_2, \ t\ge0
\end{equation}
only, see \cite{FCH,M-PS88,KZ15,Z14} and \S\ref{s3} below.
 \par
 Note that in contrast to the $Q_N$-component of $u(t)$, the $P_N$-component is typically unbounded
  on the IM, so we cannot assume any uniform bounds for it. Instead, we will use the extra
   map $T$ and Lemma
   \ref{Lem1.T} in order to control it.
 \par
 Unfortunately, the parabolic regularity estimates are not strong enough to derive estimate  \eqref{1.q-dis} with $\kappa=0$ from \eqref{1.q-est} (and, in contrast to the case of only spatial averaging considered in \cite{FCH}, we need the estimate with $\kappa=0$ in order to control the $H$ norm of $Q_N\Dt u$ which in turn is necessary for temporal averaging), so we need some bootstrapping arguments. Namely, using the facts that $W(u)$ is bounded in $H^{s_0}$ and that $H^{s_0}$ is an algebra (since $s_0>\frac32$), we establish that
 \begin{equation}\label{1.q-good}
 \|Q_N F(u)\|_{H^{s_0}}\le C_1+C_2\|Q_Nu\|_{H^{s_0}}
 \end{equation}
 where the constants $C_1$ and $C_2$ are independent of $N$ and the linear term in the RHS of \eqref{1.q-good} comes from
 $$
 \|Q_N\theta(\|u\|^2_H)a(W(u))u\|_{H^{s_0}}\le C_2\|Q_Nu\|_{H^{s_0}}.
 $$
 The parabolic regularity estimate applied to \eqref{1.par-reg} together with \eqref{1.q-good} and \eqref{1.q-dis} give us the analogue of estimate \eqref{1.q-dis} where $2-\kappa$ is replaced by $2+s_0-\kappa$. This in turn means that we can verify the cone property under stronger (than \eqref{1.QIM}) assumption that
\begin{equation}\label{1.QIMs}
\|Q_Nu(t)\|_{H^{2+s_0-\kappa}}+\|Q_N\Dt u(t)\|_{H^{s_0-\kappa}}\le C_2, \ t\ge0.
\end{equation}
This improved estimate is necessary to control the time derivative of the term
\begin{equation}\label{1.cbar}
\Cal C_{\bar u}(u):=\theta(\|u\|^2_H)a_{\bar u}(W(u))
\end{equation}
appearing in  transformations related with temporal averaging. Namely, we have the following result.

\begin{lemma}\label{Lem1.temp} Let the assumption \eqref{1.QIMs} hold. Then
\begin{equation}\label{1.C-est}
|\Dt \Cal C_{\bar u}(u(t))|\le C\(N^{1/2}+N\chi_{\|P_Nu(t)\|_{H^1}\ge4\widetilde R}(t)\),
\end{equation}
where the constant $C$ is independent of $N$ and $\chi_V(t)$ is a characteristic function of the set $V$.
\end{lemma}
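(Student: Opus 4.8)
The plan is to differentiate $\Cal C_{\bar u}(u(t))=\theta(\|u\|^2_H)\,a_{\bar u}(W(u(t)))$ along a solution of \eqref{1.cglm} by the chain rule, keeping track of how each factor scales in $N$. Since $W$ is Hadamard differentiable by Proposition \ref{Prop1.W} (so that $\frac{d}{dt}W(u(t))=W'(u(t))\Dt u$) and $\theta,f$ are smooth, I would write
\[
\Dt\Cal C_{\bar u}(u)=2\theta'(\|u\|^2_H)\,\Ree(u,\Dt u)\,a_{\bar u}(W(u))+\theta(\|u\|^2_H)\,\frac{d}{dt}a_{\bar u}(W(u)),
\]
\[
\frac{d}{dt}a_{\bar u}(W(u))=\<f''_{u\bar u}(W(u),\overline{W(u)})\,W'(u)\Dt u\>+\<f''_{\bar u\bar u}(W(u),\overline{W(u)})\,\overline{W'(u)\Dt u}\>.
\]
Throughout I would use that on $\operatorname{supp}\theta\cup\operatorname{supp}\theta'$ one has $\|u\|_H\le 2R_0$, that $a_{\bar u}(W(u))$ is bounded, that the compositions $g:=f''_{u\bar u}(W(u),\overline{W(u)})$ and $f''_{\bar u\bar u}(W(u),\overline{W(u)})$ lie in the algebra $H^{s_0}$ (as $s_0>\frac32$ and $\|W(u)\|_{H^{s_0}}\le C$ by Proposition \ref{Prop1.W}(1)) with norms bounded independently of $N$, and that $\|Q_Nu\|_{H^1}\le C$, $\|Q_N\Dt u\|_{H}\le C$ by \eqref{1.QIMs}.

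For the first ($\theta'$) term I would insert the energy identity obtained by testing \eqref{1.cglm} with $u$, namely
\[
\Ree(u,\Dt u)=-\|u\|^2_{H^1}+\Ree(u,F(u)),
\]
and recall that $\theta'\ne0$ forces $\|u\|_H\le 2R_0$, so that $\Ree(u,F(u))$ is bounded and $\|Q_Nu\|^2_{H^1}\le C$. The only remaining quantity is $\|P_Nu\|^2_{H^1}$: on the set $\{\|P_Nu\|_{H^1}<4\widetilde R\}$ it is bounded by $16\widetilde R^2$, whereas on the complement I would bound it crudely by $\|P_Nu\|^2_{H^1}\le N\|P_Nu\|^2_H\le 4R_0^2N$. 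Together with the boundedness of $\theta'$ and $a_{\bar u}(W(u))$ this gives a contribution $\le C\bigl(1+N\chi_{\|P_Nu\|_{H^1}\ge4\widetilde R}\bigr)$, which is exactly the $N\chi$ part of \eqref{1.C-est}.

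For the second ($\theta$) term I would substitute $\Dt u=-(1+i\omega)Au+F(u)$ from \eqref{1.cglm} and use that $W'(u)$ is a Fourier multiplier by \eqref{1.Wdif}, hence commutes with $P_N,Q_N$. The contributions of $W'(u)Q_N\Dt u$ (bounded in $H$ by \eqref{1.QIMs}) and of $W'(u)P_NF(u)$ (bounded since $F\colon H\to H$ is bounded) produce only $O(1)$ by Cauchy--Schwarz on Fourier coefficients. The one dangerous contribution is $-(1+i\omega)\<g\,W'(u)AP_Nu\>$. Here the decisive point is that the spatial average extracts the zeroth Fourier mode of the product, so by the convolution identity
\[
\<g\,W'(u)AP_Nu\>=\sum_{1+|n|^2\le N}g_{-n}\,\phi'(\cdot)\,(1+|n|^2)\,u_n,
\]
and, splitting the summand as $[(1+|n|^2)^{s_0/2}|g_{-n}|]\cdot[(1+|n|^2)^{1-s_0/2}|u_n|]$ and using $|\phi'|\le C$, Cauchy--Schwarz bounds the sum by $C\|g\|_{H^{s_0}}\bigl(\sum_{1+|n|^2\le N}(1+|n|^2)^{2-s_0}|u_n|^2\bigr)^{1/2}$. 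Since $s_0>1$ we have $(1+|n|^2)^{2-s_0}\le(1+|n|^2)$, so the last factor is $\le C\|P_Nu\|_{H^1}\le CN^{1/2}\|P_Nu\|_H\le CN^{1/2}$ on $\operatorname{supp}\theta$; the conjugate term with $f''_{\bar u\bar u}$ is handled identically after reindexing $n\mapsto-n$. Adding this $CN^{1/2}$ to the previous $C(1+N\chi)$ yields \eqref{1.C-est}.

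I expect the main obstacle to be precisely the estimate of $\<g\,W'(u)AP_Nu\>$. Since $\|AP_Nu\|_H$ is of order $N$, a naive $L^2$ pairing gives only $O(N)$ uniformly on $\operatorname{supp}\theta$, which is too weak for the temporal averaging in \S\ref{s3}. The full power $N^{1/2}$ is recovered only because two features cooperate: the average keeps merely the zeroth mode of the product, letting us pair the high frequencies of $AP_Nu$ against the decaying tail of $g\in H^{s_0}$, and the smoothing exponent satisfies $s_0>1$, which is exactly what turns the weight $(1+|n|^2)^{2-s_0}$ into the $H^1$-weight $(1+|n|^2)$. A secondary care point is bookkeeping between $\operatorname{supp}\theta$ and $\operatorname{supp}\theta'$: the quadratic dissipation $\|P_Nu\|^2_{H^1}$ entering the $\theta'$-term is what forces the coarser factor $N\chi_{\|P_Nu\|_{H^1}\ge4\widetilde R}$ on the exceptional set, whereas the $\theta$-term is uniformly $O(N^{1/2})$.
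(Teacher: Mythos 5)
Your argument is correct, but it takes a genuinely different and considerably heavier route than the paper's. The paper never opens up the average $a_{\bar u}$ at all: it bounds the entire chain-rule derivative at once by $|\Dt\Cal C_{\bar u}(u(t))|\le\widetilde\theta(\|u(t)\|_H^2)\|\Dt u(t)\|_H$ (every factor produced by the chain rule is bounded, with a cut-off $\widetilde\theta$ vanishing for $\|u\|_H^2>4R_0^2$), and then runs the same dichotomy in $\|P_Nu\|_{H^1}$ that you deploy only in your $\theta'$-term: if $\|P_Nu(t)\|_{H^1}\ge4\widetilde R$, then \eqref{1.eggog} gives $\|\Dt u\|_H\le C(1+N\|u\|_H)\le CN$ on the support of $\widetilde\theta$, which is the $N\chi$ term; if $\|P_Nu(t)\|_{H^1}\le4\widetilde R$, then $\|P_Nu\|_{H^2}\le N^{1/2}\|P_Nu\|_{H^1}\le4N^{1/2}\widetilde R$, which is the $N^{1/2}$ term. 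In particular, the ``main obstacle'' you identify is not really there: the weighted convolution/Cauchy--Schwarz estimate on $\<g\,W'(u)AP_Nu\>$ is unnecessary, because the same case split applies to the $\theta$-term --- on $\{\|P_Nu\|_{H^1}\le4\widetilde R\}$ even the naive pairing suffices, since $\|AP_Nu\|_H\le N^{1/2}\|P_Nu\|_{H^1}\le CN^{1/2}$, while on the complement the $N\chi$ term absorbs the crude $O(N)$ bound. What your refinement buys is a sharper structural statement than the lemma asserts: the $\theta$-part is $O(N^{1/2})$ uniformly on $\operatorname{supp}\theta$ (in fact $O(N^{1/4})$, since $s_0>\frac32$ gives $(1+|n|^2)^{2-s_0}\le(1+|n|^2)^{1/2}$), so the exceptional factor $N\chi$ originates solely in the $\theta'$-term; the price is the algebra property of $H^{s_0}$ and the composition bound for $g=f''(W(u),\overline{W(u)})$, neither of which the paper's proof needs here. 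One justification in your write-up must be corrected, although it does not affect the result: $F$ is \emph{not} uniformly in $N$ bounded from $H$ to $H$ --- the piece $T_N(u)$ from \eqref{5.T} equals $-\frac12 A^{1/2}P_Nu$ once $\|P_Nu\|_{H^1}\ge\widetilde R$ and is only $O(N^{1/2})$ in $H$ on $\operatorname{supp}\theta$ (the paper itself is careful to say that all parts of $F$ \emph{except} $T(u)$ are uniformly bounded); the same caveat applies to your claim that $\Ree(u,F(u))$ is bounded in the $\theta'$-term. Since $O(N^{1/2})$ contributions are admissible in \eqref{1.C-est}, both slips are harmless to the final estimate, but ``bounded since $F\colon H\to H$ is bounded'' is not a valid step as written and should be replaced by the explicit $O(N^{1/2})$ bound on $T_N$.
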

\begin{proof} Indeed, let $t$ be such that $\|P_N u(t)\|_{H^1}\ge4\widetilde R$. Then, using that the function $f$ is smooth and has a finite support and that $W'(u)$ is a bounded operator, we get the estimate
\begin{equation}\label{1.cest}
|\Dt \Cal C_{\bar u}(u(t))|\le \widetilde\theta(\|u(t)\|_H^2)\|\Dt u(t)\|_H,
\end{equation}
where the function $\widetilde\theta(z)$ vanishes for $z>4R_0^2$. On the other hand, from \eqref{1.QIMs} we know that $\|Q_N\Dt u(t)\|_H$ is uniformly bounded. To estimate $\Dt P_N u$, we apply the orhoprojector $P_N$ to equation \eqref{1.cglm} to get
\begin{multline}\label{1.eggog}
\|P_N\Dt u\|\le C(1+\|P_N u\|_{H^2})\le \\\le C(1+N \|P_Nu\|_H)\le C(1+N\|u\|_H).
\end{multline}
Therefore,
$$
\|\Dt u(t)\|_H\le C(1+N\|u(t)\|_H)
$$
and inserting this estimate into \eqref{1.cest}, we get the desired result.
\par
Let now $\|P_N u(t)\|_{H^1}\le 4\widetilde R$. Then,
$$
\|P_Nu(t)\|_{H^2}\le N^{1/2}\|P_Nu(t)\|_{H^1}\le 4N^{1/2}\widetilde R
$$
and from \eqref{1.eggog}, we see that
$$
\|\Dt u(t)\|_H\le CN^{1/2}
$$
which gives \eqref{1.C-estd}
 and finishes the proof of the lemma.
\end{proof}
The last lemma in this section gives the analogue of estimate \eqref{1.C-est} for a convex sum of solutions of equation \eqref{1.cglm} and will be useful for establishing the cone property for differences of solutions of \eqref{1.cglm}.
\begin{lemma}\label{Lem1.temp1} Let $u_1(t)$ and $u_2(t)$ be two solutions of \eqref{1.cglm} satisfying \eqref{1.QIMs} and let $\alpha\in[0,1]$. Then, the following estimate holds:
\begin{multline}\label{1.C-estd}
|\Dt \Cal C_{\bar u}(\alpha u_1(t)+(1-\alpha)u_2(t))|\le \\\le C\(N^{1/2}+N\(\chi_{\|P_Nu_1(t)\|_{H^1}\ge4\widetilde R}(t)+\chi_{\|P_Nu_2(t)\|_{H^1}\ge4\widetilde R}(t)\)\),
\end{multline}
where the constant $C$ is independent of $N$, $\alpha$, $u_1$ and $u_2$.
\end{lemma}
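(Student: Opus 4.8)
The plan is to run the scheme of Lemma \ref{Lem1.temp} verbatim, with the single solution $u(t)$ replaced by the convex combination $w(t):=\alpha u_1(t)+(1-\alpha)u_2(t)$, thereby reducing the whole statement to an estimate for $\|\Dt w(t)\|_H$ on the region where the cut-off in $\Cal C_{\bar u}$ is active. First I would differentiate $\Cal C_{\bar u}(w)=\theta(\|w\|_H^2)a_{\bar u}(W(w))$ in time. Since $f\in C_0^\infty$ (so $f'_{\bar u}$ and its derivatives are bounded), $W'$ is a bounded operator by \eqref{1.W-sm}, and $\theta,\theta'$ are bounded with support in $\{\|w\|_H\le2R_0\}$, the chain rule produces exactly the analogue of \eqref{1.cest}, namely $|\Dt\Cal C_{\bar u}(w)|\le\widetilde\theta(\|w\|_H^2)\|\Dt w\|_H$ with $\widetilde\theta$ vanishing for $\|w\|_H>2R_0$. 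Hence it suffices to bound $\|\Dt w\|_H$ under the standing assumption $\|w\|_H\le2R_0$; in particular $\|P_Nw\|_H\le\|w\|_H\le2R_0$. Throughout I abbreviate $\chi_i:=\chi_{\|P_Nu_i(t)\|_{H^1}\ge4\widetilde R}(t)$.

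Next, using \eqref{1.cglm} for $u_1,u_2$ and the linearity of $A$, I would write
\[
\Dt w=-(1+i\omega)Aw+\alpha G(u_1)+(1-\alpha)G(u_2)-\big(\alpha T_N(u_1)+(1-\alpha)T_N(u_2)\big),
\]
where $G:=F+T_N$ collects the $N$-independent terms of \eqref{1.comp} and satisfies $\|G(u_i)\|_H\le C$. The contribution of $G$ and of $AQ_Nw$ is harmless: $\|AQ_Nw\|_H\le C$ by \eqref{1.QIMs}. Two terms remain, the low-mode dispersive term $\|AP_Nw\|_H=\|P_Nw\|_{H^2}$ and the cut-off combination $S:=\alpha T_N(u_1)+(1-\alpha)T_N(u_2)$. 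For the former I would exploit convexity of the $H^1$-norm, $\|P_Nw\|_{H^1}\le\alpha\|P_Nu_1\|_{H^1}+(1-\alpha)\|P_Nu_2\|_{H^1}$: if both $\|P_Nu_i\|_{H^1}<4\widetilde R$ (i.e. $\chi_1=\chi_2=0$) then $\|P_Nw\|_{H^1}<4\widetilde R$ and $\|P_Nw\|_{H^2}\le N^{1/2}\|P_Nw\|_{H^1}\le CN^{1/2}$, whereas otherwise the crude bound $\|P_Nw\|_{H^2}\le N\|P_Nw\|_H\le2R_0N$ is absorbed into $CN(\chi_1+\chi_2)$ since then $\chi_1+\chi_2\ge1$.

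The main obstacle is the term $S$: the individual solutions $u_i$ need not be bounded in $H$ when $\|w\|_H\le2R_0$ (their large $P_N$-parts may nearly cancel in $w$), so $\|T_N(u_i)\|_H\le\frac12\|P_Nu_i\|_{H^1}$ can be arbitrarily large and the termwise estimate of Lemma \ref{Lem1.temp} is simply unavailable. To overcome this I would use the explicit structure of $T_N$ from \eqref{5.T} together with Lemma \ref{Lem1.T}: write $T_N(u_i)=\varphi_iA^{1/2}P_Nu_i$ with $\varphi_i\in[-\frac12,0]$ and $\varphi_i=-\frac12$ once $\|P_Nu_i\|_{H^1}\ge\widetilde R$. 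When both indices are saturated the combination telescopes, $S=-\frac12A^{1/2}P_Nw$, so $\|S\|_H\le\frac12\|P_Nw\|_{H^1}\le N^{1/2}\|P_Nw\|_H\le CN^{1/2}$; when both are unsaturated, $\|S\|_H\le\frac12(\alpha\|P_Nu_1\|_{H^1}+(1-\alpha)\|P_Nu_2\|_{H^1})<\widetilde R$; and in the mixed case (say only $u_1$ saturated) the algebraic identity $\alpha A^{1/2}P_Nu_1=A^{1/2}P_Nw-(1-\alpha)A^{1/2}P_Nu_2$ moves the large saturated part into $A^{1/2}P_Nw$ (bounded by $CN^{1/2}$) at the expense only of the unsaturated part $\|P_Nu_2\|_{H^1}<\widetilde R$. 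In every case $\|S\|_H\le CN^{1/2}$.

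Collecting the three estimates gives $\|\Dt w\|_H\le C\big(N^{1/2}+N(\chi_1+\chi_2)\big)$ on $\{\|w\|_H\le2R_0\}$, and combining with the chain-rule bound $|\Dt\Cal C_{\bar u}(w)|\le\widetilde\theta(\|w\|_H^2)\|\Dt w\|_H$ yields \eqref{1.C-estd}. I expect the telescoping/coefficient argument for $S$ to be the only genuinely new point compared with Lemma \ref{Lem1.temp}; the rest is a bookkeeping of the $N$-independent and the $P_N/Q_N$ contributions exactly as before.
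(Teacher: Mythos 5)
Your proposal is correct and follows essentially the paper's own argument: the chain rule reduces the claim to bounding $\|\Dt w\|_H$, $w=\alpha u_1+(1-\alpha)u_2$, on the support of the cut-off, and then the equation together with a case analysis on whether the cut-offs in $T_N$ are saturated (both, one, or neither --- exactly the paper's Cases I--III, including the same telescoping $\alpha T_N(u_1)+(1-\alpha)T_N(u_2)=-\frac12A^{1/2}P_Nw$ when both are saturated) yields the estimate. The only deviation is a mild sharpening: in the mixed case the paper settles for the cruder bound $CN$ on $\Dt P_Nw$, absorbed by the characteristic function via $\|P_Nw\|_H\ge(1-\alpha)\|P_Nu_2\|_H-C$, whereas your identity $\alpha A^{1/2}P_Nu_1=A^{1/2}P_Nw-(1-\alpha)A^{1/2}P_Nu_2$ gives the uniform bound $CN^{1/2}$ for the cut-off combination, so that in your bookkeeping only the dispersive term $AP_Nw$ produces the $N(\chi_1+\chi_2)$ contribution --- a correct and slightly cleaner organization of the same proof.
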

\begin{proof} Arguing as in the proof of Lemma \ref{Lem1.temp}, we get the estimate
\begin{multline}\label{1.eggog1}
|\Dt \Cal C_{\bar u}(\alpha u_1(t)+(1-\alpha)u_2(t))|\le \\
\widetilde\theta(\|\alpha P_N u_1(t)+(1-\alpha)P_N u_2(t)\|_H^2)
(1+\|\alpha P_N\Dt u_1(t)+(1-\alpha)P_N\Dt u_2(t)\|_H)
\end{multline}
for some smooth function $\widetilde\theta(z)$ vanishing if $z>4R_0^2+C$ (here we have implicitly used that the $H$-norms of $Q_Nu_i$ are bounded). Moreover, from equation \eqref{1.cglm}, we infer that
\begin{multline}\label{1.huge}
\|\Dt P_N(\alpha  u_1+(1-\alpha)u_2)\|_H\le C(1+N\|P_N(\alpha  u_1+(1-\alpha)u_2)\|_H)\\+
N^{1/2}\|\alpha \varphi(\|A^{1/2}P_Nu_1\|_H^2)P_Nu_1+(1-\alpha)\varphi(\|A^{1/2}P_Nu_2\|_H^2)P_Nu_2\|_H,
\end{multline}
where we have used again that all parts of the nonlinearity $F$ except of the term  $T(u)$ are uniformly bounded.
\par
Let us consider now three  alternative cases.
\par
{\it Case I.} Both functions $u_1,u_2$ satisfy
$$
\|A^{1/2}P_Nu_i\|_H\ge 4\widetilde R.
$$
In this case, both $\varphi$'s in the second term are equal to $-\frac12$. Therefore, we have
$$
\|\Dt P_N(\alpha u_1+(1-\alpha)u_2\|_H\le C(1+N\|P_N(\alpha u_1+(1-\alpha)u_2)\|_H)
$$
and inserting this estimate to the RHS of \eqref{1.eggog1}  gives the desired inequality.
\par
{\it Case II.} One of the functions  $u_1(t),u_2(t)$ (say, $u_1$) satisfies
$$
\|P_NA^{1/2}u_1(t)\|_H\le 4\widetilde R
$$
and $\|P_NA^{1/2}u_2(t)\|_H\ge 4\widetilde R$.
In this case $\|u_1\|_H$ is uniformly (with respect to $N$) bounded and, therefore, \eqref{1.huge} reads
$$
\|\Dt P_N(\alpha u_1+(1-\alpha)u_2)\|_H\le C'N(1+\|(1-\alpha)P_N u_2\|_H)
$$
for some  constant $C'$ independent of $N$. Inserting this estimate to the RHS and using that
$$
\|P_N(\alpha u_1+(1-\alpha)u_2\|_H\ge\|(1-\alpha)P_Nu_2\|_H-C,
$$
we get the desired estimate in this case as well.
\par
{\it Case III.} Both functions $u_1(t),u_2(t)$ satisfy
$$
\|A^{1/2}P_Nu_i(t)\|_{H}\le 4\widetilde R.
$$
Then, as in Lemma \ref{Lem1.temp}, $\|P_Nu_i\|_{H^2}\le 4N^{1/2}\widetilde R\le CN^{1/2}$ and, therefore,
$$
\|\alpha \Dt u_1+(1-\alpha)\Dt u_2\|_{H}\le CN^{1/2}
$$
which together with \eqref{1.eggog1} gives the desired result.
\par
Thus, estimate \eqref{1.C-estd} is verified in all 3 cases and the lemma is proved.
\end{proof}

\section{Spatio-temporal averaging and cone property}\label{s2}
The aim of this section is to verify the strong cone property for solutions of the modified problem \eqref{1.cgl}. We start with the equation of variations associated with it:
\begin{equation}\label{2.var}
\Dt v+(1+i\omega)Av=F'(u(t))v,
\end{equation}
where $F'(u(t))$ is defined by \eqref{1.derder} and the trajectory $u(t)$ is assumed to satisfy \eqref{1.QIMs}.  In contrast to the previous works on inertial manifolds, the invariant cones for \eqref{2.var} will depend on $t$ through the trajectory $u(t)$ and in order to introduce them we need some preparations.
\par
First, we split the function $v(t)=v_+(t)+v_I(t)+v_-(t)$ in a sum of lower, intermediate and higher modes. In particular, the equation for the most "dangerous" intermediate modes $v_I:=\Cal I_{N,K}v$ (where the parameters $N$ and $K$ will be fixed later) reads
\begin{equation}\label{2.var-I}
\Dt v_I+(1+i\omega)Av_I=\Cal I_{N,K}F'(u)(v_++v_I+v_-).
\end{equation}
As we will see later, the terms containing $v_+$ and $v_-$ are under the control due to the proper choice of the parameter $K$, so we only need to take care about the restriction of $\Cal I_{N,K}F'(u)\Cal I_{N,K}$ of $F'(u)$ to intermediate modes.
\par
We start with the term $\Cal I_{N,K} l_1(u)\Cal I_{N,K}$, see \eqref{1.derder}.
\par
\begin{lemma}\label{Lem2.sp-av} For every $\eb>0$ and $K\in\Bbb N$, there exist infinitely many values of $N\in\Bbb N$ such that, for every $u\in H$, the following estimate holds:
\begin{equation}\label{2.sp-avest}
\|\Cal I_{N,K}l_1(u)\Cal I_{N,K}v\|_{H}\le\eb\|v\|_H,\ \ v\in H.
\end{equation}
\end{lemma}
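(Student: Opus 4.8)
The plan is to recognise $l_1(u)$ as a \emph{multiplication minus its spatial average} operator dressed with two harmless Fourier multipliers, and then to invoke the number-theoretic core of the spatial averaging principle. Using the definition \eqref{1.derder} of $l_1$ together with the notation \eqref{1.a-a}, I would first write
$$
l_1(u)v=(b_1-\<b_1\>)\,W'(u)v+(b_2-\<b_2\>)\,W'(\bar u)\bar v,
$$
where $b_1:=f'_u(W(u),W(\bar u))$ and $b_2:=f'_{\bar u}(W(u),W(\bar u))$, so that $a_u(W(u))=\<b_1\>$ and $a_{\bar u}(W(u))=\<b_2\>$ are genuine constants (the mean values over $\Bbb T$). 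By \eqref{1.Wdif} the operators $W'(u)$ and $W'(\bar u)$ are diagonal in the Fourier basis, hence commute with $\Cal I_{N,K}$, and by \eqref{1.W-sm} with $\kappa=0$ they are bounded on $H$ uniformly in $u$. Since the shell $\{N-K<1+|n|^2<N+K\}$ is symmetric under $n\mapsto-n$, one also has $\overline{\Cal I_{N,K}v}=\Cal I_{N,K}\bar v$. Consequently $\Cal I_{N,K}l_1(u)\Cal I_{N,K}v$ splits into two pieces of the form $\Cal I_{N,K}(b_j-\<b_j\>)\Cal I_{N,K}w_j$ with $\|w_j\|_H\le C\|v\|_H$, and the lemma reduces to producing, for each $b\in\{b_1,b_2\}$, infinitely many $N$ with $\|\Cal I_{N,K}(b-\<b\>)\Cal I_{N,K}\|_{\Cal L(H)}\le\eb/(2C)$.

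Next I would record the uniform regularity of the multipliers. Since $W$ maps $H$ boundedly into $H^{s_0}$ (Proposition \ref{Prop1.W}) and $H^{s_0}$ is a Banach algebra for $s_0>\tfrac32$, the smoothness and compact support of $f$ give a bound $\|b\|_{H^{s_0}}\le C$ independent of $u$. In Fourier coefficients this reads $\sum_k(1+|k|^2)^{s_0}|b_k|^2\le C$, and since $2s_0>3$ the Cauchy--Schwarz inequality yields both $\sum_k|b_k|\le C$ and the uniform tail estimate $\sum_{|k|>R}|b_k|\to0$ as $R\to\infty$, with $C$ independent of $u$.

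I would then estimate the reduced operator by the Schur test. Its matrix in the Fourier basis, restricted to the shell, has entries $b_{n-m}$ for $n\ne m$ and $0$ on the diagonal (the diagonal being removed precisely by subtracting $\<b\>=b_0$), whence $\|\Cal I_{N,K}(b-\<b\>)\Cal I_{N,K}\|_{\Cal L(H)}\le\sup_{n}\sum_{m\ne n}|b_{n-m}|$, the sum taken over $m$ in the shell. Splitting at $|n-m|=R$, the far part is $\le\sum_{|k|>R}|b_k|<\eb/(2C)$ for $R$ large, uniformly in $N$ and $u$ by the tail estimate. It remains to annihilate the near part $0<|n-m|\le R$, and here I would invoke the number-theoretic lemma of Mallet--Paret and Sell \cite{M-PS88}: for the now-fixed $R$ and the given $K$ there exist infinitely many $N$ for which no two \emph{distinct} lattice points $n,m$ with $N-K<1+|n|^2,\,1+|m|^2<N+K$ satisfy $|n-m|\le R$. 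For such $N$ the near part vanishes identically, giving the required bound, and summing the two pieces yields \eqref{2.sp-avest}.

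The genuinely hard step is the last one --- the existence of infinitely many well-separated shells. This is the arithmetic heart of the three-dimensional theory: two lattice points at comparable large radius can be close only if their difference $k$ forces $n\cdot k\in(-\tfrac{|k|^2}2-K,-\tfrac{|k|^2}2+K)$, and one must show these finitely many slabs (one per $k$ with $0<|k|\le R$) can be simultaneously avoided by the shell for infinitely many $N$. I would quote this directly from \cite{M-PS88} rather than reprove it; everything else --- the algebraic reduction, the uniform Fourier decay, and the Schur estimate --- is routine once the separation of modes is in hand.
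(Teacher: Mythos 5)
Your proposal is correct and is essentially the argument the paper relies on: the paper proves Lemma \ref{Lem2.sp-av} by citation to \cite{FCH} (building on \cite{M-PS88,Z14}), and the proof there is exactly your scheme --- reduce $l_1(u)$ to two ``multiplication minus mean'' operators sandwiched between the diagonal (hence $\Cal I_{N,K}$-commuting, uniformly bounded) multipliers $W'(u)$, $W'(\bar u)$, use the uniform $H^{s_0}$-bound on $W(u)$ and the algebra property to get $u$-independent summable Fourier tails, estimate via the Schur test, and kill the near-diagonal part $0<|n-m|\le R$ on infinitely many shells via the Mallet--Paret--Sell number-theoretic lemma. Your identification of that arithmetic lemma (including the slab condition $n\cdot k\in\left(-\tfrac{|k|^2}2-K,-\tfrac{|k|^2}2+K\right)$) as the only non-routine ingredient, quoted rather than reproved, matches the paper's treatment.
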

The proof of this lemma is given in \cite{FCH} and is based on the spatial averaging technique developed in \cite{M-PS88}, see also \cite{Z14}.
\par
The term containing $l_3(u)$ is estimated as follows
\begin{equation}\label{2.l3}
\|\Cal I_{N,K}l_3(u)v\|_H\le C\|\Cal I_{N,K}W(u)\|_{H}\|v\|_H\le C'(N-K)^{-s_0/2}\|v\|_H,
\end{equation}
where we have used that $W(u)$ is bounded in $H^{s_0}$. Analogously, it is not difficult to see using the assumption \eqref{1.QIMs} together with finiteness of the support of $\theta$ that the term containing $l_4(u)$ possesses the estimate
\begin{multline}\label{2.l4}
\|\Cal I_{N,K}l_4(u)v\|_H\le C(\theta(\|u\|^2_H)+|\theta'(\|u\|^2_H)|\|u\|_H)\|\Cal I_{N,K}u\|_{H}\|v\|_H\le\\\le C\((N-K)^{-1/2}+\chi_{\|P_Nu(t)\|_{H^1}\ge4\widetilde R}(t)\)\|v\|_H
\end{multline}
and, as we will see below, can be controlled with the help of an  extra operator $T'(u)v$. Thus, equation \eqref{2.var-I} can be rewritten in the form
\begin{equation}\label{2.var-I1}
\Dt v_I+(1+i\omega)Av_I-\Cal C_u(u)v_I-\Cal C_{\bar u}(u)\bar v_I =\Cal R_{I}v,
\end{equation}
where  $\Cal C_u(u):=\theta(\|u\|^2_H)a_u(W(u))$, $\Cal C_{\bar u}(u)$ is defined by \eqref{1.cbar} and
\begin{equation}
\Cal R_Iv:=\Cal I_{N,K}l_1(u)v+\Cal I_{N,K}l_3(u)v+\Cal I_{N,K}l_4(u)v-\Cal I_{N,K}T'(u)v.
\end{equation}
The main difference of equation \eqref{2.var-I1} from the analogous equations considered in \cite{FCH} is the presence of an extra non-scalar term $\Cal C_{\bar u}(u)\bar v_I$ which does not allow us to get the desired cone property in a direct way. Inspired by \cite{K20}, we will treat this term using temporal averaging technique. To this end, we write equations \eqref{2.var-I1} in Fourier base:
\begin{multline}\label{2.var-I3}
\Dt v_n+(1+i\omega)(1+|n|^2)v_n-\Cal C_u(u)v_n-\Cal C_{\bar u}(u)\bar v_n=\\=\Cal R_n v:=(\Cal R_Iv,e_n),\ \ N-K<1+|n|^2<N+K
\end{multline}
and do the following transform
\begin{equation}\label{2.av-tr}
z_n(t):=v_n(t)+\frac i{2\omega(1+|n|^2)}\Cal C_{\bar u}(u(t))\bar v_n(t).
\end{equation}
In these new variables equation \eqref{2.var-I3} reads
\begin{multline}
\Dt z_n+(1+i\omega)(1+|n|^2)z_n-\Cal C_u z_n=\Cal R_nv+\\+\frac i{2\omega(1+|n|^2)}\Cal C_{\bar u}\bar{\Cal R}_nv+\frac i{2\omega(1+|n|^2)}\Cal C_{\bar u}(\bar{\Cal C}_u-\Cal C_u)\bar v_n+\bar{\Cal C}_{\bar u}v_n)+\\+\frac i{2\omega(1+|n|^2)}(\Dt \Cal C_{\bar u})\bar v_n
 \end{multline}
On the other hand, the inverse transform to \eqref{2.av-tr} reads
\begin{equation}
v_n=\frac1{1-\frac{|\Cal C_{\bar u}|^2}{4\omega^2(1+|n|^2)^2}}\(z_n-\frac i{2\omega(1+|n|^2)}\Cal C_{\bar u}\bar z_n\).
\end{equation}
Note that we are doing this transform with intermediate modes only, so $1+|n|^2>N-K$ and, therefore, this transform is $C(N-K)^{-1}$-close to the identical one (since $\Cal C_{\bar u}$ is uniformly bounded). In a sequel, we will fix $N\gg K\gg1$, so the quantity $(N-K)^{-1}$ will be indeed small.
\par
Finally, we define the transform of the intermediate modes $v_I\to z_I$ by \eqref{2.av-tr} and
leave the higher and lower modes unchanged:
\begin{equation}\label{2.transf}
z=z_++z_I+z_-:=v_++z_I+v_-:=\Cal Q(u)v.
\end{equation}
In these new variables the equation for intermediate modes will have the form
\begin{equation}\label{2.var-I4}
\Dt z_I+(1+i\omega)Az_I-\Cal C_u(u)z_I=\Cal R_I z+\Cal R_{I,ext}z- \Cal C_{\bar u, ext}\bar z_I,
\end{equation}
where
\begin{equation}
(\Cal C_{\bar u,ext}z_I)_n=\frac {-i}{2\omega(1+|n|^2)}(\Dt \Cal C_{\bar u})\bar z_n,\ \ N-K<1+|n|^2<N+K
\end{equation}
and  the operator $\Cal R_{I,ext}$ satisfies
\begin{equation}\label{2.ext}
\|\Cal R_{I,ext}z\|_H\le C(N-K)^{-1/2}\|z\|_H
\end{equation}
with the constant $C$ which is independent of $u$, $N$ and $K$ (we have also implicitly used Lemma \ref{Lem1.temp} in order to replace $v_I$  by $z_I$ in the term containing time derivative of $\Cal C_{\bar u}$. Moreover, using the fact that all terms except of $T'(u)$ in the definition of $F'(u)$ are uniformly bounded, we replace $v_I$ by $z_I$ in the equations for higher and lower modes as well absorbing the small extra terms by the operators $\Cal R_{-,ext}$ and $\Cal R_{+,ext}$ which will
satisfy the analogue of \eqref{2.ext}.  We have also used here estimate  \eqref{1.T-est} in order to control the norm of $T'(u)(v-z)$.
\par
Comparing equations \eqref{2.var-I1} and \eqref{2.var-I4}, we see that the temporal averaging trick allows us to get rid of the problematic non-scalar term $\Cal C_{\bar u}\bar v_I$ which prevented us to use the standard invariant cone technique. The extra terms $\Cal R_{ext}z$ are not dangerous due to estimate \eqref{2.ext} and the only dangerous term is the one containing time derivative of $\Cal C_{\bar u}(u(t))$, but as we will see below, it is also under the control due to Lemma \ref{Lem1.temp} and the "good" term $T'(u)z$, so we are now ready to proceed with cone estimates.
\par
Let us define the quadratic form and the associated cone by
\begin{equation}\label{2.cone}
V(\xi):=\|Q_N\xi\|^2_H-\|P_N\xi\|^2_H \ \ \ \text{and}\ \ \Cal K^+:=\{\xi\in H,\ V(\xi)\le0\}
\end{equation}
and respectively.
\par
The following strong cone property in differential form is crucial for constructing the desired IMs.

\begin{theorem}\label{Th2.cone} It is possible to fix $K\in\Bbb N$ in such a way that
for infinitely many $N\gg K$,
any solution $u(t)$ of problem \eqref{1.cglm} satisfying \eqref{1.QIMs} and any solution $v(t)$ of the equation of variations \eqref{2.var}, the corresponding function $z(t)$ satisfies the following differential inequality:
\begin{equation}\label{2.ccone}
\frac d{dt}V(z(t))+\alpha(u(t))V(z(t))\le -\mu\|z(t)\|^2_H,
\end{equation}
where $\mu>0$ and $0<\alpha_-\le\alpha(u(t))\le\alpha_+<\infty$ for some $\alpha_\pm$ independent of~$u$.
\end{theorem}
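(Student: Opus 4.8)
The plan is to differentiate $V(z(t))=\|Q_Nz\|^2_H-\|P_Nz\|^2_H$ along the flow and to estimate $\frac d{dt}V+\alpha V$ after splitting $z=z_++z_I+z_-$ into low ($1+|n|^2\le N-K$), intermediate ($N-K<1+|n|^2<N+K$) and high ($1+|n|^2\ge N+K$) modes. First I would record that the dispersive part drops out of the energy identity: since $A$ is self-adjoint with real spectrum, $\Ree((1+i\omega)A\xi,\xi)=(A\xi,\xi)=\sum_n(1+|n|^2)|\xi_n|^2$. Feeding \eqref{2.var-I4} and the corresponding transformed equations for $z_\pm$ into $\frac12\frac d{dt}V$ then produces, on each Fourier mode, the diagonal spectral term $-\sigma_n(1+|n|^2)|z_n|^2$ with $\sigma_n=+1$ on $Q_N$ and $\sigma_n=-1$ on $P_N$, together with the contributions of the scalar operator $\Cal C_u$, of the surviving non-scalar term $\Cal C_{\bar u}$, and of all the remainders. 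The whole task is to show that, for a suitable floating rate $\alpha(u)$, these diagonal terms beat everything else.

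The key algebraic observation I would exploit is that the choice
$$
\alpha(u):=2N-2\Ree\,\Cal C_u(u)
$$
makes the scalar shift cancel exactly: on an intermediate mode the diagonal coefficient of $\frac d{dt}V+\alpha V$ collapses to $2(N-(1+|n|^2))$ on the $Q_N$-part and to $2((1+|n|^2)-N)$ on the $P_N$-part, i.e. it only sees the distance of $1+|n|^2$ from $N$. On the low and high modes the same $\alpha$ gives diagonal coefficients $\le-2K+C$, where $C$ collects the $N$-independent contributions of $\Cal C_u$, of the leftover coupling $\Cal C_{\bar u}\bar z_n$ (harmless here, being merely $O(1)$), and of $l_1,l_3,l_4,\Cal R_{\pm,ext}$ estimated through Lemma~\ref{Lem2.sp-av} and \eqref{2.l3}, \eqref{2.l4}, \eqref{2.ext}. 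Taking $K$ large first renders the low and high groups strictly dissipative with a large margin, which lets me absorb by Young's inequality both their mutual couplings and their couplings into the intermediate band — the latter being small because they force a frequency shift across a band of width $K$ while $W(u)$ is bounded in $H^{s_0}$ (this is the ``control by the proper choice of $K$'' announced after \eqref{2.var-I}).

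The delicate group is the intermediate one, where $1+|n|^2\approx N$ and no gap is visible a priori; this is exactly what the two averagings were arranged for. Spatial averaging (Lemma~\ref{Lem2.sp-av}) has reduced $\Cal I_{N,K}F'(u)\Cal I_{N,K}$ to the scalar multiplication $\Cal C_u$ up to an $\eb$-error, and the temporal substitution \eqref{2.av-tr} has removed the non-scalar $\Cal C_{\bar u}\bar z_I$, so that \eqref{2.var-I4} retains on its diagonal only $\Cal C_u z_I$, which $\alpha$ annihilates. To turn the residual coefficients $2(N-(1+|n|^2))$ and $2((1+|n|^2)-N)$ into $-\mu$ \emph{simultaneously} for the sub- and super-$N$ intermediate modes, I would use that the admissible $N$ in Lemma~\ref{Lem2.sp-av} can be taken to avoid the (integer) spectrum by a fixed margin $\delta>0$, so that every intermediate mode obeys $|1+|n|^2-N|\ge\delta$; then both residual coefficients are $\le-2\delta$, and after absorbing the small remainders $\Cal R_Iz$, $\Cal R_{I,ext}z$ (estimate \eqref{2.ext}) and the temporal remainder $\Cal C_{\bar u,ext}\bar z_I$ one reaches $\le-\mu$ for any $\mu<2\delta$.

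The remaining point, which I expect to be the main obstacle, is that two remainders fail to be small precisely when $\|P_Nu\|_{H^1}\ge4\widetilde R$: the term $l_4$ by \eqref{2.l4}, and the temporal remainder $\Cal C_{\bar u,ext}$, whose size is governed through $\Dt\Cal C_{\bar u}$ by Lemma~\ref{Lem1.temp} (and, for differences of solutions, Lemma~\ref{Lem1.temp1}), are then only $O(1)$. Here I would invoke the cut-off operator $T$: by Lemma~\ref{Lem1.T} one has $(T'(u)v,v)=-\frac12\|P_Nv\|^2_{H^{1/2}}$ in exactly this regime, so the term $-T'(u)z$ present in \eqref{1.derder} supplies a coercive negative contribution on $P_N$ dominating the $O(1)$ couplings, whereas in the complementary regime those same remainders are already $O((N-K)^{-1/2})$. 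Collecting the three mode groups and the two regimes, I would fix $K$ large, then $\mu\in(0,2\delta)$, and finally pick $N$ from the infinitely many admissible values, obtaining \eqref{2.ccone} with $\alpha(u)=2N-2\Ree\,\Cal C_u(u)\in[2N-2C_0,2N+2C_0]=:[\alpha_-,\alpha_+]$, the bounds being independent of $u$ since $|\Cal C_u|\le C_0$. The genuinely new difficulty compared with the purely spatial-averaging case of \cite{FCH} is this joint bookkeeping: a single $u$-dependent rate must serve both halves of the intermediate band while the temporal remainder, controlled by Lemma~\ref{Lem1.temp}, is tamed by $T'$ exactly in the dangerous large-$P_Nu$ regime.
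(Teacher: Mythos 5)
Your overall architecture matches the paper's proof: differentiate $V(z)$ along the flow, split into the three mode groups, use Lemma \ref{Lem2.sp-av} to reduce the intermediate block of $l_1$ to an $\eb$-error, control $l_3$, $l_4$ and the temporal remainders via \eqref{2.l3}, \eqref{2.l4}, \eqref{2.ext} and Lemma \ref{Lem1.temp}, and invoke the cut-off operator $T$ in the regime $\|P_Nu\|_{H^1}\ge 4\widetilde R$. However, there is a genuine gap exactly at the point you yourself identify as the main obstacle. In the dangerous regime the terms to be absorbed have the form $C\chi\|z\|^2_H$ with $\chi=\chi_{\|P_Nu(t)\|_{H^1}\ge4\widetilde R}$ (see \eqref{2.l4} and \eqref{2.cext}), and they involve the \emph{full} norm of $z$, in particular the piece $C\chi\|Q_Nz\|^2_H$ carried by the intermediate modes with $N<1+|n|^2<N+K$. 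The coercive contribution supplied by Lemma \ref{Lem1.T} is $-\frac12\chi\|P_Nz\|^2_{H^{1/2}}$, which lives on $P_N$ only: it can never dominate a multiple of $\|Q_Nz\|^2_H$ (take $z$ concentrated in the band just above $N$ with $P_Nz=0$; there your diagonal margin is at most $2\delta\le 1$, while the constant $C$ coming from Lemma \ref{Lem1.temp} is fixed but not small). Consequently, with your rate $\alpha(u)=2N-2\Ree\,\Cal C_u(u)$ the differential inequality \eqref{2.ccone} cannot be closed.

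The paper resolves this by letting the rate itself float with $\chi$: in \eqref{2.TT} the term $-\frac12\chi\|P_Nz\|^2_{H^{1/2}}$ is traded, using $\|P_Nz\|^2_H=\|Q_Nz\|^2_H-V(z)$, for $\frac18K\chi V(z)-\frac18K\chi\|z\|^2_H$ plus band corrections, so that the final coefficient is $\alpha(u)=2N+1-2\Ree\,\Cal C_u(u)-\frac14K\chi_{\|P_Nu(t)\|_{H^1}\ge4\widetilde R}$; choosing $K$ large first, the term $-\frac18K\chi\|z\|^2_H$ swallows all the $O(1)\chi$ remainders, and $\alpha$ stays within $u$-independent bounds because $K$ is fixed before $N$. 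Your argument can be repaired along the same lines (shift the $\chi$-part of the bad terms into $\alpha$ via $V(z)$), but as written both the stated formula for $\alpha$ and the absorption step fail, and this $\chi$-dependence of $\alpha$ is precisely the novel bookkeeping of the theorem. A secondary, fixable point: your margin $|1+|n|^2-N|\ge\delta$ on the intermediate band requires the admissible $N$ of Lemma \ref{Lem2.sp-av} to avoid the integer spectrum, which that lemma does not assert; the paper sidesteps this by centering the spectral splitting at $N+\frac12$, which is automatically at distance $\ge\frac12$ from every eigenvalue $1+|n|^2\in\Bbb Z$.
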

\begin{proof} Indeed, differentiating the expression $V(z(t))$ in time and expressing time derivatives from the equations for $v(t)$ and $z(t)$, we get
\begin{multline}\label{2.cone1}
\frac12\frac d{dt}V(z(t))+\Ree((1+i\omega)Az,Q_Nz-P_Nz)-\Cal \Ree \Cal C_u(u(t))V(z(t))=\\=\Ree(l_1(u)z,Q_Nz-P_Nz)+
\Ree(l_3(u(t))z,Q_Nz-P_Nz)+\\+\Ree(l_4(u)z,Q_Nz-P_Nz)-\Ree(T'(u)z,Q_Nz-P_Nz)+\\+
\Ree(\Cal C_{\bar u}(u(t))(\bar z_++\bar z_-), Q_Nz-P_Nz)+\\
+\Ree(\Cal R_{ext}z,Q_Nz-P_Nz)-\Ree(\Cal C_{\bar u,ext}z_I,Q_Nz-P_Nz).
\end{multline}
The middle term in the LHS of this equality can be estimated as follows:
\begin{multline}
\Ree((1+i\omega)Az,Q_Nz-P_Nz)=(N+\frac12)V(z(t))+\\+((A-N-\frac12)Q_Nz,Q_Nz)+
((N+\frac12-A)P_Nz,P_Nz)\ge\\\ge (N+\frac12)V(z(t))+\frac12\|z\|^2_H+\frac12K\|\Cal P_{N,K}z\|^2_H+\\+K\|\Cal Q_{N,K}z\|^2_H+\frac12((N-A)P_Nz,z),
\end{multline}
where we have implicitly used that all eigenvalues  $\lambda_n\in\Bbb Z$.
\par
Using the fact that the operators $l_i(u)$, $i=1,3,4$, are uniformly bounded with respect to $N$ and $K$, we get
\begin{multline}
\Ree(l_i(u)z,Q_Nz-P_Nz)\le \Ree(\Cal I_{N,K}l_i(u)\Cal I_{N,K}z,Q_Nz-P_Nz)+\\+C(\|\Cal P_{N,K}z\|_H+\|\Cal Q_{N,K}z\|_H)\|z\|_H\le \|\Cal I_{N,K}l_i(u)\Cal I_{N,K}\|_{\Cal L(H,H)}\|z\|^2_H+\\+\frac1{16}\|z\|^2_H+C'(\|\Cal P_{N,K}z\|^2_H+\|\Cal Q_{N,K}z\|_H^2),
\end{multline}
where the constant $C'$ is independent of $N$ and $K$. The term containing $\Cal C_{\bar u}$ is even easier to estimate:
$$
|\Ree(\Cal C_{\bar u}(u(t))(\bar z_++\bar z_-), Q_Nz-P_Nz)|\le C\(\|\Cal P_{N,K}z\|_H^2+\|\Cal Q_{N,K}z\|^2_H\).
$$
Moreover, due to Lemma \ref{Lem1.temp}, we have
\begin{equation}\label{2.cext}
|\Ree(\Cal C_{\bar u,ext}z_I,Q_Nz-P_Nz)|\le C((N-K)^{-1/2}+\chi_{\|P_N u(t)\|_{H^1}\ge 4\widetilde R})\|z\|^2_H.
\end{equation}
 Combining the obtained estimates with \eqref{2.l3}, \eqref{2.l4} and \eqref{2.ext} and inserting the result to \eqref{2.cone1}, we arrive at
\begin{multline}\label{2.cone2}
\frac12\frac d{dt} V(z)+(N+\frac12-\Ree \Cal C_u(u(t)))V(z)+\frac12((N-A)P_Nz,z)\le\\\le (4C'-K)(\|\Cal P_{N,K}z\|^2_H+\|\Cal Q_{N,K}z\|_H^2)-\\\(\frac12\!-\!{C''}(N-K)^{-1/2}\!-\! \|\Cal I_{N,K}l_1(u)\Cal I_{N,K}\|_{\Cal L(H,H)}\!-\!C\chi_{\|P_Nu(t)\|_{H^1}\ge 4\widetilde R}\)\|z\|^2_H\\+\Ree(T'(u)P_Nz,P_Nz).
\end{multline}
According to Lemma \ref{Lem1.T}, we get
\begin{multline}\label{2.TT}
\Ree(T'(u)P_Nz,P_Nz)\le -\frac12\chi_{\|P_Nu\|_{H^1}\ge 4\widetilde R}(t)\|P_N z\|_{H^{1/2}}^2+\\+\frac12\((N^{1/2}-A^{1/2})P_Nz,z\)= \frac18K\chi_{\|P_Nu(t)\|_{H^1}\ge4\widetilde R}(t)V(z(t))-\\-\frac18K\chi_{\|P_Nu(t)\|_{H^1}\ge4\widetilde R}\|Q_Nz\|^2_H-\\-\frac12\chi_{\|P_Nu(t)\|_{H^1}\ge4\widetilde R}(t)\(\|P_Nz\|_{H^{1/2}}^2-\frac14K\|P_Nz\|_H^2\)+\\+\frac12\((N^{1/2}-A^{1/2})P_Nz,z\).
\end{multline}
Using now the elementary estimate
\begin{multline}
\|P_Nz\|_{H^{1/2}}^2-\frac14K\|P_Nz\|_H^2\ge\\\ge \((N-K)^{1/2}-\frac14K\)\|P_N\Cal I_{N,k}z\|^2_H-\frac14K\|\Cal P_{N,K}z\|^2_H
\end{multline}
together with the fact that
\begin{multline*}
\((N^{1/2}-A^{1/2})P_Nz,z\)\le\\\le
\((N^{1/2}+A^{1/2})(N^{1/2}-A^{1/2})^{1/2}P_Nz,(N^{1/2}-A^{1/2})^{1/2}P_Nz\)=\\= \((N-A)P_Nz,z\),
\end{multline*}
where we have implicitly used that $(N^{1/2}+A^{1/2})\ge1$,
and assuming that $N-K$ is big enough, we end up with the estimate
\begin{multline}
\Ree(T'(u)P_Nz,P_Nz)\le \frac18K\chi_{\|P_Nu(t)\|_{H^{1}}\ge4\widetilde R}(t)V(z(t))-\\-\frac18K\chi_{\|P_Nu(t)\|_{H^{1}}\ge4\widetilde R}(t)\|z\|^2_H+\frac38K\|\Cal P_{N,K}z\|^2_H+\frac12((N-A)P_Nz,z).
\end{multline}
Thus, fixing $K$ large enough and assuming that $K\ll N$, inequality \eqref{2.cone2} reads
\begin{multline}\label{2.cone3}
\frac12\frac d{dt} V(z)+
\(N+\frac12-\Ree \Cal C_u(u(t))-\frac18 K\chi_{\|P_Nu(t)\|_{H^1}\ge4\widetilde R}\)V(z)
\le\\\le
-\(\frac18- \|\Cal I_{N,K}l_1(u)\Cal I_{N,K}\|_{\Cal L(H,H)}\)\|z\|^2_H.
\end{multline}
Finally, fixing $N\gg1$ in such a way that the spatial averaging inequality \eqref{2.sp-avest} is satisfied with $\eb=\frac1{16}$, we get the desired cone inequality \eqref{2.ccone} with
$$
\mu:=\frac1{8},\ \ \alpha(u(t)):=2\(N+\frac12-\Ree \Cal C_u(u(t))-\frac18 K\chi_{\|P_Nu(t)\|_{H^1}\ge4\widetilde R}\)
$$
and finish the proof of the theorem.
\end{proof}
We now state the analogue of the proved theorem for finite differences between trajectories of \eqref{1.cglm}. In contrast to the case where the cones are independent of $u(t)$, this is not an immediate corollary of the proved result for infinitesimal differences and requires some additional care.
\par
Let $u_1(t)$ and $u_2(t)$ be two solutions of \eqref{1.cglm} satisfying \eqref{1.QIMs} and let $v(t):=u_1(t)-u_2(t)$. Then this function solves the following equation of variations:
\begin{equation}
\Dt v+(1+i\omega)Av=F'(u_1,u_2)v,
\end{equation}
where
\begin{equation}\label{2.F-finite}
F'(u_1(t),u_2(t)):=\int_0^1F'(s u_1(t)+(1-s)u_2(t))\,ds.
\end{equation}
The operators $l_i(u_1,u_2)$, $T'(u_1,u_2)$, $\Cal C_{\bar u}(u_1,u_2)$, $\Cal C_{\bar u,ext}(u_1,u_2)$, etc. and the transform $v\to z$ are defined analogously. The next theorem claims that the function $z(t)$ thus defined satisfies the cone inequality \eqref{2.ccone} (with probably different constants).
\begin{corollary}\label{Cor2.cone} It is possible to fix $K\in\Bbb N$ in such a way that for infinitely many $N\gg K$, any two solutions $u_1(t)$ and $u_2(t)$ of problem \eqref{1.cglm} satisfying \eqref{1.QIMs} and $v(t)=u_1(t)-u_2(t)$, the corresponding function $z(t)$ satisfies the following differential inequality:
\begin{equation}\label{2.cccone}
\frac d{dt}V(z(t))+\alpha_{u_1,u_2}(t)V(z(t))\le -\mu\|z(t)\|^2_H,
\end{equation}
where $\mu>0$ and $0<\alpha_-\le\alpha_{u_1,u_2}(t)\le\alpha_+<\infty$ for some $\alpha_\pm$ independent of~$u_1$ and $u_2$.
\end{corollary}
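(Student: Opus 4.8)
The plan is to repeat the argument of Theorem~\ref{Th2.cone} for the averaged operator $F'(u_1,u_2)$ of \eqref{2.F-finite}, paying attention to the two places where passing from one trajectory to a convex combination is genuinely nontrivial. First I would introduce $g_s(t):=su_1(t)+(1-s)u_2(t)$ and set $l_i(u_1,u_2):=\int_0^1 l_i(g_s)\,ds$, $T'(u_1,u_2):=\int_0^1 T'(g_s)\,ds$, $\Cal C_u(u_1,u_2):=\int_0^1\Cal C_u(g_s)\,ds$ and $\Cal C_{\bar u}(u_1,u_2):=\int_0^1\Cal C_{\bar u}(g_s)\,ds$, and define the transform $v\to z$ exactly as in \eqref{2.av-tr}--\eqref{2.transf} but with $\Cal C_{\bar u}(u_1,u_2)$ in place of $\Cal C_{\bar u}(u)$. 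Most estimates survive the integration in $s$ for free, precisely because they are uniform in the base point: the spatial averaging bound \eqref{2.sp-avest} of Lemma~\ref{Lem2.sp-av} holds for every $u\in H$ with one and the same $N$, so $\|\Cal I_{N,K}l_1(u_1,u_2)\Cal I_{N,K}\|_{\Cal L(H,H)}\le\eb$ by the triangle inequality under the integral; likewise the bounds \eqref{2.l3}, \eqref{2.l4} and the uniform boundedness of $l_i$, $\Cal C_u$ and $\Cal C_{\bar u}$ are stable under averaging, and the transform stays $C(N-K)^{-1}$-close to the identity.

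The first delicate point is the temporal averaging. The extension operator now carries $\Dt\Cal C_{\bar u}(u_1,u_2)=\int_0^1\Dt\Cal C_{\bar u}(g_s)\,ds$, and here I would invoke Lemma~\ref{Lem1.temp1} rather than Lemma~\ref{Lem1.temp}: since its bound \eqref{1.C-estd} is uniform in $\alpha\in[0,1]$, integration in $s$ gives $|\Dt\Cal C_{\bar u}(u_1,u_2)(t)|\le C(N^{1/2}+N(\chi_1+\chi_2))$, where $\chi_i:=\chi_{\|P_Nu_i(t)\|_{H^1}\ge4\widetilde R}$. Dividing by $2\omega(1+|n|^2)$ on the intermediate modes, where $1+|n|^2>N-K$, then yields the analogue of \eqref{2.cext}, i.e.\ a contribution bounded by $C((N-K)^{-1/2}+\chi_1+\chi_2)\|z\|_H^2$.

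The main obstacle is compensating this $C(\chi_1+\chi_2)\|z\|_H^2$ by the good term $\Ree(T'(u_1,u_2)P_Nz,P_Nz)$. In contrast to the one-trajectory situation the strong dissipation of $T'$ is tied to the convex combination $g_s$ and not to $u_1$ or $u_2$ separately, so I cannot simply quote \eqref{2.TT}. The structural fact I would use is that $T'(g_s)=-\tfrac12A^{1/2}P_N$ on the whole region $\{\|P_Ng_s\|_{H^1}\ge\widetilde R\}$ (there $\varphi\equiv-\tfrac12$ and $\varphi'\equiv0$ in \eqref{5.T}), while the neutral bound $(T'(g_s)v,v)\le\tfrac12((N^{1/2}-A^{1/2})v,v)$ of Lemma~\ref{Lem1.T} holds for every $s$. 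I would then estimate the Lebesgue measure of $\{s\in[0,1]\st\|P_Ng_s\|_{H^1}\ge\widetilde R\}$, exploiting the gap between the radius $\widetilde R$ at which $T$ switches on full dissipation and the radius $4\widetilde R$ at which the dangerous characteristic functions switch on. Splitting into the three cases of Lemma~\ref{Lem1.temp1} (both $\|P_Nu_i\|_{H^1}\ge4\widetilde R$; exactly one of them; neither) and using that $s\mapsto\|P_Ng_s\|_{H^1}^2$ is a convex quadratic, the non-dissipative set is a single subinterval about the minimiser of length at most $2\widetilde R/\|P_N(u_1-u_2)\|_{H^1}$; in the regime where this bound is not already small the minimal norm $\min_s\|P_Ng_s\|_{H^1}$ is forced to exceed $\widetilde R$, so the set is empty. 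The conclusion is that a definite fraction of the segment (at least $3/8$, say) lies in the full-dissipation region, so that $\Ree(T'(u_1,u_2)P_Nz,P_Nz)$ supplies a dissipation of order $K$ times this fraction, exactly the mechanism of \eqref{2.TT} with slightly worse constants.

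With this measure estimate in hand the rest is bookkeeping. I would reproduce the chain \eqref{2.cone2}--\eqref{2.cone3} with $\chi$ replaced by $\chi_1+\chi_2$ throughout and with the averaged $\Ree\Cal C_u(u_1,u_2)$ entering the exponent. Fixing $K$ so large that the fraction-times-$K$ dissipation beats the constant $C$ in front of $\chi_1+\chi_2$, and then $N\gg K$ so that \eqref{2.sp-avest} holds with $\eb=\tfrac1{16}$, I obtain the cone inequality \eqref{2.cccone} with $\mu=\tfrac18$ and an exponent $\alpha_{u_1,u_2}(t)$ equal to $2(N+\tfrac12-\Ree\Cal C_u(u_1,u_2)(t))$ corrected by a term bounded in absolute value by $CK$; since $\Cal C_u$ is uniformly bounded and $K\ll N$, this stays in a fixed interval $[\alpha_-,\alpha_+]$ independent of $u_1,u_2$. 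The only genuinely new ingredient beyond Theorem~\ref{Th2.cone} is the measure-theoretic control of the averaged map $T'(u_1,u_2)$ just described.
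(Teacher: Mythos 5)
Your proposal is correct and takes essentially the same route as the paper's own proof: the segment-averaged operators, the observation that all uniform-in-$u$ bounds survive integration in $s$, the use of Lemma \ref{Lem1.temp1} for $\Dt\Cal C_{\bar u}(u_1,u_2)$, and the identification of the averaged $T'(u_1,u_2)$ as the only genuinely new difficulty, resolved by showing that a definite fraction of $s\in[0,1]$ satisfies $\|P_N(su_1+(1-s)u_2)\|_{H^1}\ge\widetilde R$ whenever one of the $\chi_i$ is switched on --- this is precisely the role of the paper's elementary inequality \eqref{2.ab}, which produces the fraction $\tfrac12$ via the reverse triangle inequality where your convexity/chord-length dichotomy produces $\tfrac13$ rather than the claimed $\tfrac38$ (the threshold $\|P_N(u_1-u_2)\|_{H^1}\le3\widetilde R$ forces the bad interval to be empty, and otherwise its length is at most $\tfrac23$). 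This numerical slip is immaterial, since any fixed positive fraction suffices once $K$ is fixed large enough afterwards.
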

\begin{proof} The proof of inequality \eqref{2.cccone} repeats almost word by word the derivation of \eqref{2.ccone} given above. Indeed, as not difficult to see, the estimates for the terms contained in
 $F'(u_1,u_2)$ are analogous to the estimates for $F'(u)$. By this reason, we discuss here only the estimates containing the characteristic function $\chi_{\|P_Nu(t)\|_{H^1}\ge4\widetilde R}$ which should be naturally replaced by the sum  $\chi_{\|P_Nu_1(t)\|_{H^1}\ge4\widetilde R}+\chi_{\|P_Nu_2(t)\|_{H^1}\ge4\widetilde R}$. The possibility to do  such a replacement in the analogue of estimate \eqref{2.cext} follows from Lemma \ref{Lem1.temp1} and such a replacement in \eqref{2.l4} is straightforward, so we only need to get the analogue of \eqref{2.TT}
 for the term $T'(u_1,u_2)$. To this end we need the following elementary estimate:
 \begin{equation}\label{2.ab}
 \mu\(s\in[0,1]\,:\ |sa-(1-s)b|\ge\frac{a+b}4\)\ge \frac12,
 \end{equation}
 where $a,b>0$ and $\mu$ stands for the Lebesgue measure in $\R$. The proof of this inequality is straightforward and we left it to the reader.
 \par
 We will use this inequality with $a=\|P_Nu_1(t)\|_{H^1}$ and $b=\|P_Nu_2(t)\|_{H^1}$. Assume that
 at least one of $a,b\ge4\widetilde R$. Then, $(a+b)/4\ge\widetilde R$ and we have
 \begin{multline*}
 \mu\(s\in[0,1]\,:\ \|su_1+(1-s)u_2\|_{H^1}\ge \widetilde R\)\ge\\\ge \mu\(s\in[0,1]\,:\ |sa-(1-s)b|\ge \frac{a+b}4\)\ge\frac12
 \end{multline*}
 and, due to Lemma \ref{Lem1.T}, we arrive at
 \begin{multline}
 \Ree(T'(u_1,u_2)z,z)=\int_0^1\Ree(T'(su_1+(1-s)u_2)z,z)\,ds\le\\ -\frac1{8}\( \chi_{\|P_Nu_1(t)\|_{H^1}\ge4\widetilde R}+\chi_{\|P_Nu_2(t)\|_{H^1}\ge4\widetilde R}\)\!\|P_Nz\|^2_{H^{1/2}}+\frac12\((N-A)P_Nz,z\),
 \end{multline}
 where we have implicitly used that
 $$
 \chi_{A\cup B}(t)=\max\{\chi_A(t),\chi_B(t)\}\ge\frac12\(\chi_A(t)+\chi_B(t)\).
 $$
Thus, all necessary extensions of the estimates used in the proof of Theorem \ref{Th2.cone} are verified and the rest of the derivation of \eqref{2.cccone} is the same as in Theorem \ref{Th2.cone}. This gives us estimate \eqref{2.cccone} with $\mu=\frac18$ and
\begin{multline*}
\alpha_{u_1,u_2}(t):=2N+1-2\Ree(\Cal C_{u}(u_1(t),u_2(t))-\\-\frac1{16}K\(\chi_{\|P_Nu_1(t)\|_{H^1}\ge4\widetilde R}+\chi_{\|P_Nu_2(t)\|_{H^1}\ge4\widetilde R}\)
\end{multline*}
and finishes the proof of the corollary.
\end{proof}

 \begin{remark}\label{Rem2.equiv} Obviously estimate \eqref{2.cccone} for finite differences implies estimate  \eqref{2.ccone} for infinitesimal ones. By this reason, the infinitesimal cone property \eqref{2.ccone} holds for all values $N$ and $K$ for which \eqref{2.cccone} holds.
 \end{remark}

\section{Inertial Manifolds}\label{s3}
In this section, we construct an Inertial Manifold (IM) for the modified problem \eqref{1.cglm}. We start by recalling the definition of this object.

\begin{definition} A set $\Cal M$ is an IM for problem \eqref{1.cglm} with the base $H_+:=P_NH$ if
\par
1) $\Cal M$ is an invariant ($S(t)\Cal M=\Cal M$) Lipschiz submanifold of $H$ which is the graph of a globally Lipschitz function $M: H_+\to H_-$, i.e.
\begin{equation}\label{3.graph}
\Cal M=\{u_++M(u_+),\ \ u_+\in H_+\}
\end{equation}
\par
2) It possesses the so-called exponential tracking property, i.e. for every solution $u(t)$, $t\ge0$ of problem \eqref{1.cglm}, there exists a "trace" solution $\bar u(t)\in\Cal M$ such that
$$
\|u(t)-\bar u(t)\|_H\le Q(\|u_0\|_H)e^{-\gamma t},
$$
for some positive $\gamma$ and monotone function $Q$.
\end{definition}
It is well-known, see e.g. \cite{M-PS88,Z14} and references therein that the existence of an IM follows from the so-called cone and squeezing properties. It is also known that the cone property in differential form similar to \eqref{2.cccone} implies both cone and squeezing properties, see \cite{Z14}. Therefore, due to Theorem \ref{Th2.cone} and Corollary \ref{Cor2.cone} everything is "almost" prepared to verify the existence of the IM for our problem \eqref{1.cglm}. The only problem here is that the above mentioned theorems on the existence of IMs are usually formulated for the case where the invariant cones are independent of the point of the phase space, but in our situation only transformed function $z(t)$ satisfies the cone property $z(0)\in\Cal K^+\Rightarrow z(t)\in\Cal K^+$ and the invariant
 cones  for the associated cones for the initial infinitesimal difference $v(t)$ will depend on $u$
 through the $z\to v$ transform.  Although such dependence of invariant cones on the point of phase space is typical for the theory of normally hyperbolic invariant manifolds (at least in finite dimensions), one should be careful since some extra conditions are necessary in this case to guarantee the existence of a manifold. For instance, the classical Lorenz system possesses invariant cones, but does not possess the corresponding 2D invariant manifold. We overcome this problem by using the invariant cones for {\it finite} differences between solutions which are obtained in Corollary \ref{Cor2.cone}.
  \par
  We proceed with formulating the analogues of cone and squeezing properties for our case. For any two points $u_1$ and $u_2$ of $H$ satisfying \eqref{1.QIMs}, we define
  \begin{equation}
  \Cal K^+_{u_1,u_2}:=\{v\in H\,:\ z:=\Cal Q(u_1,u_2)v\in \Cal K^+\},
  \end{equation}
  where the cone $\Cal K^+$ and the transform $\Cal Q$ is defined by \eqref{2.cone} and \eqref{2.transf} respectively.

  \begin{lemma} Let the assumptions of Corollary \ref{Cor2.cone} hold. Then, for any two solutions $u_1(t)$ and $u_2(t)$ of problem \eqref{1.cglm} satisfying \eqref{1.QIMs} the following properties hold:
  \par
  1) Cone property: the cones $\Cal K^+_{u_1,u_2}$ are invariant, e.g.
  $$
  u_1(0)-u_2(0)\in\Cal K^+_{u_1(0),u_2(0)}\ \   \Rightarrow\ \ u_1(t)-u_2(t)\in\Cal K^+_{u_1(t),u_2(t)},\ \ t>0.
  $$
  Moreover, if in addition $u_1(t)\ne u_2(t)$, then $u_1(t)-u_2(t)\notin\partial \Cal K^+_{u_1(t),u_2(t)}$.
  \par
  2) Squeezing property: if $u_1(T)-u_2(T)\notin\Cal K^+_{u_1(T),u_2(T)}$ for some $T>0$, then
  $$
  \|u_1(t)-u_2(t)\|_H\le Ce^{-\gamma t}\|u_1(0)-u_2(0)\|_H,\ \ t\in[0,T]
  $$
  for some positive $C$ and $\gamma$ which are independent of $t$.
  \end{lemma}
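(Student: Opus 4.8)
The plan is to deduce both properties from the finite-difference cone inequality \eqref{2.cccone} of Corollary \ref{Cor2.cone}, working throughout with the transformed variable $z(t)=\Cal Q(u_1(t),u_2(t))v(t)$, where $v(t)=u_1(t)-u_2(t)$. Since the transform \eqref{2.transf} is $C(N-K)^{-1}$-close to the identity, it and its inverse are uniformly bounded on $H$, so $c\|v(t)\|_H\le\|z(t)\|_H\le C\|v(t)\|_H$, and by definition $v(t)\in\Cal K^+_{u_1(t),u_2(t)}$ iff $V(z(t))\le0$. Thus everything reduces to the scalar function $y(t):=V(z(t))$, which by \eqref{2.cccone} obeys $y'(t)+\alpha_{u_1,u_2}(t)y(t)\le-\mu\|z(t)\|_H^2\le0$, with $0<\alpha_-\le\alpha_{u_1,u_2}(t)\le\alpha_+$.

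For the cone property I would integrate this inequality with the integrating factor $E(t):=\exp\!\big(\int_0^t\alpha_{u_1,u_2}(s)\,ds\big)$, obtaining $\frac{d}{dt}(E(t)y(t))\le-\mu E(t)\|z(t)\|_H^2\le0$, whence $E(t)y(t)\le y(0)$. If $v(0)\in\Cal K^+_{u_1(0),u_2(0)}$, i.e. $y(0)\le0$, then $y(t)\le0$ for all $t>0$ and the cone is preserved. For the strict statement, note that by forward uniqueness for \eqref{1.cglm} the assumption $u_1(t)\ne u_2(t)$ forces $v(s)\ne0$, hence $z(s)\ne0$ and $\|z(s)\|_H^2>0$, for every $s\in[0,t]$; then $E(t)y(t)\le y(0)-\mu\int_0^t E(s)\|z(s)\|_H^2\,ds<y(0)\le0$, so $y(t)<0$ and $v(t)$ lies strictly inside $\Cal K^+_{u_1(t),u_2(t)}$, off its boundary.

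For the squeezing property I would first propagate the exit condition backwards using the contrapositive of the cone property: if $y(T)>0$ but $y(t_0)\le0$ for some $t_0<T$, applying the cone property on $[t_0,T]$ would give $y(T)\le0$, a contradiction; hence $V(z(t))>0$ on the whole interval $[0,T]$. This is precisely the regime in which the high modes dominate, $\|z(t)\|_H^2\le2\|Q_Nz(t)\|_H^2$, and I would exploit the strong dissipativity of $A$ on $Q_NH$. Writing the equation for $z$ as $\Dt z+(1+i\omega)Az=G(t)z$, where $G$ collects $\Cal C_u$, the remainders $\Cal R$, $\Cal R_{ext}$, $\Cal C_{\bar u,ext}$ and the (untransformed) low- and high-mode terms, all of which are bounded in $\Cal L(H,H)$ by $CN^{1/2}$ thanks to \eqref{1.T-est} and the uniform bounds on the remaining pieces of $F'$, I would take the $H$-inner product with $z$ and pass to real parts. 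Since $\Ree((1+i\omega)Az,z)=(Az,z)\ge N\|Q_Nz\|_H^2\ge\tfrac12 N\|z\|_H^2$ whenever $V(z)>0$, this yields $\tfrac12\frac{d}{dt}\|z\|_H^2\le-\big(\tfrac12 N-CN^{1/2}\big)\|z\|_H^2\le-\gamma\|z\|_H^2$ for $N\gg1$. Integrating gives $\|z(t)\|_H\le e^{-\gamma t}\|z(0)\|_H$ on $[0,T]$, and passing back through the near-identity transform produces the claimed bound $\|u_1(t)-u_2(t)\|_H\le Ce^{-\gamma t}\|u_1(0)-u_2(0)\|_H$.

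The main obstacle is the squeezing step. The cone property is essentially a one-line Gronwall argument, but the norm decay cannot be read off from the $V$-inequality alone, since outside the cone $\|z\|_H^2$ need not be comparable to $V(z)$; one genuinely needs the separate energy estimate above. The point requiring care is that this estimate must be carried out for the transformed variable $z$ with the \emph{floating} cone $\Cal K^+_{u_1(t),u_2(t)}$ updated at every instant. This is exactly the reason the finite-difference Corollary \ref{Cor2.cone}, rather than the infinitesimal Theorem \ref{Th2.cone}, is the right tool, and why the $t$-dependence of the cones does not obstruct the construction.
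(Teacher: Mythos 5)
Your proposal is correct and follows essentially the same route as the paper, which disposes of the lemma in one line: the cone property is the Gronwall consequence of the differential inequality \eqref{2.cccone} (your integrating-factor computation, including the strict-interior refinement via forward uniqueness), and the squeezing property is the standard energy estimate of \cite{Z14} in the regime $V(z)>0$ where the $Q_N$-modes dominate and the dissipation $(Az,z)\ge\frac{N}{2}\|z\|_H^2$ beats the Lipschitz constant of the nonlinearity. Your only (harmless) variant is to run the energy estimate on the full transformed variable $z$, absorbing the $O(N^{1/2})$ bound \eqref{1.T-est} for $T'$ into the $O(N)$ dissipation, rather than restricting to the $Q_N$-component where $Q_NT_N\equiv0$ makes $F$ uniformly Lipschitz; you also correctly identify that the estimate must be performed for $z$, not $v$, since the floating cones are defined through the transform $\Cal Q(u_1,u_2)$.
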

Indeed, the first property is an immediate corollary of differential inequality \eqref{2.cccone} and the second one follows from the fact that the nonlinearity $F(u)$ in \eqref{1.cglm} is globally Lipschitz exactly as in \cite{Z14}.
\par
We are now ready to state the main result of this section.

\begin{theorem}\label{Th3.main} Let the assumptions of Corollary \ref{Cor2.cone} hold. Then equation \eqref{1.cglm} possesses a Lipschitz  IM $\Cal M$ with the base $H_+=P_NH$ which consists of all complete trajectories $u(t)$, $t\in\R$, for which the $Q_N$-component remains bounded as $t\to-\infty$:
\begin{equation}\label{3.rep}
\Cal M:=\left\{u(0):\ u(t), \ t\in\R \ {\rm solves\  \eqref{1.cglm}},\ \limsup_{t\to-\infty}\|Q_Nu(t)\|_H<\infty\right\}.
\end{equation}
\end{theorem}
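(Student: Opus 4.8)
The plan is to obtain $\Cal M$ from the abstract cone construction of inertial manifolds (see \cite{Z14,M-PS88,FCH}), feeding it the two ingredients already secured in Section \ref{s2}: the cone property for finite differences (Corollary \ref{Cor2.cone}) and the squeezing property established in the Lemma preceding this theorem. The only genuinely new point is that our cones $\Cal K^+_{u_1,u_2}$ float with the base point through the transform $\Cal Q(u_1,u_2)$ of \eqref{2.transf}; since the abstract theorems are stated for fixed cones and, as the Lorenz example shows, cone invariance by itself does not produce a manifold, the whole argument must be run with the \emph{finite-difference} cones of Corollary \ref{Cor2.cone} rather than with the infinitesimal ones. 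I would therefore organise the proof around showing directly that the set $\Cal M$ defined by \eqref{3.rep} is the graph of a globally Lipschitz map $M\colon H_+\to H_-$ enjoying exponential tracking.

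First I would establish that the projection $P_N\colon\Cal M\to H_+$ is injective, which is the heart of the matter. Let $u_1(t),u_2(t)$ be two complete trajectories in $\Cal M$ with $P_Nu_1(0)=P_Nu_2(0)$, set $v=u_1-u_2$ and $z=\Cal Q(u_1,u_2)v$. Since the transform acts diagonally within each eigenspace of $A$, one has $P_Nz(0)=0$, hence $V(z(0))=\|Q_Nz(0)\|^2_H\ge0$, and $V(z(0))>0$ unless $v(0)=0$. If $v(0)\ne0$ then, because \eqref{2.cccone} forces the closed cone $\Cal K^+$ to be forward invariant, its strict complement is backward invariant, so $V(z(t))>0$ for all $t\le0$; integrating \eqref{2.cccone} backward (dropping the harmless $-\mu\|z\|^2_H$ term) gives $V(z(t))\ge e^{\alpha_-|t|}V(z(0))$, whence $\|Q_Nz(t)\|_H\to\infty$ as $t\to-\infty$. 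As $\Cal Q(u_1,u_2)$ is $O((N-K)^{-1})$-close to the identity and alters only the intermediate band, this forces $\|Q_Nu_1(t)\|_H+\|Q_Nu_2(t)\|_H\to\infty$, contradicting the backward boundedness built into \eqref{3.rep}. Hence $v(0)=0$ and injectivity holds. The same dichotomy yields the Lipschitz bound: for any two points of $\Cal M$ the difference must lie in $\Cal K^+_{u_1,u_2}$, i.e. $V(z(0))\le0$, and since the transform is close to the identity this reads $\|Q_N(u_1(0)-u_2(0))\|_H\le L\|P_N(u_1(0)-u_2(0))\|_H$ with $L$ close to $1$, giving $\mathrm{Lip}(M)\le L$.

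Next I would prove surjectivity of $P_N|_{\Cal M}$, i.e. that for each $u_+\in H_+$ there is a complete trajectory in $\Cal M$ with $P_Nu(0)=u_+$. Here the $N$-uniform backward dissipative estimate \eqref{1.q-dis} of Lemma \ref{Lem5.Q} is decisive: it guarantees that any family of trajectories with prescribed $P_N$-value at $t=0$ has uniformly bounded and asymptotically compact $Q_N$-components on $(-\infty,0]$. I would approximate by solutions on expanding intervals $[-n,\infty)$ with $P_Nu_n(0)=u_+$, the $P_N$-data at $t=-n$ being obtained by a standard shooting/continuity argument in the finite-dimensional variable $P_Nu_n(-n)$, then use \eqref{1.q-dis} to extract a locally uniformly convergent subsequence and pass to the limit to produce a complete backward-bounded trajectory; the injectivity already proved shows the limit is independent of the approximation. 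Invariance $S(t)\Cal M=\Cal M$ is then immediate from the representation \eqref{3.rep}, since time translation maps complete backward-bounded trajectories to complete backward-bounded trajectories.

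Finally, the exponential tracking follows from the squeezing property exactly as in \cite{Z14}: given any forward solution $u(t)$, one selects the trace $\bar u\in\Cal M$ so that the difference $u-\bar u$ stays outside the floating cones for all $t\ge0$, and then part 2 of the preceding Lemma yields $\|u(t)-\bar u(t)\|_H\le Ce^{-\gamma t}\|u(0)-\bar u(0)\|_H$; a standard limiting construction over $[0,T]$ as $T\to\infty$, again using \eqref{1.q-dis}, produces such a $\bar u$. I expect the main obstacle to be precisely the bookkeeping forced by the floating cones: every invariance and squeezing statement must be phrased in the transformed variable $z=\Cal Q(u_1,u_2)v$ with the correct \emph{time-dependent} base point, and it is only because Corollary \ref{Cor2.cone} delivers \eqref{2.cccone} for finite differences, and not merely for the equation of variations, that the passage from invariant cones to an invariant Lipschitz manifold is legitimate.
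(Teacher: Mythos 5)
Your proposal is correct in substance and follows the same global strategy as the paper: run everything through the finite-difference floating cones of Corollary \ref{Cor2.cone} together with the squeezing property, realize $\Cal M$ as the set \eqref{3.rep} of complete trajectories with backward-bounded $Q_N$-component, and read off the graph, Lipschitz and tracking properties (your exponential-tracking step is literally the paper's Step 4). The two places where you deviate are worth comparing. For uniqueness and the Lipschitz bound you integrate \eqref{2.cccone} backward and run a blow-up dichotomy: $V(z(0))>0$ forces $V(z(t))\ge e^{\alpha_-|t|}V(z(0))$, contradicting \eqref{3.rep}. The paper instead applies the squeezing property to the difference (estimate \eqref{3.good}) and lets $T_2\to\infty$. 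Your route is valid and arguably more transparent about \emph{why} the representation \eqref{3.rep} characterizes the manifold; the only detail to supply is the passage from $\|Q_Nz(t)\|_H\to\infty$ to unboundedness of $\|Q_Nu_i(t)\|_H$, which works because on the complement of the cone $\|P_Nz\|_H\le\|Q_Nz\|_H$, so $\|z\|_H\le\sqrt{2}\,\|Q_Nz\|_H$ and the $O\bigl((N-K)^{-1}\bigr)$ intermediate-band correction of $\Cal Q(u_1,u_2)$ is absorbed, exactly as you indicate. Also note (as the paper does tacitly) that Corollary \ref{Cor2.cone} may be applied to trajectories of $\Cal M$ only after Lemma \ref{Lem5.Q} has been used to upgrade $\limsup_{t\to-\infty}\|Q_Nu(t)\|_H<\infty$ to \eqref{1.QIMs} on the whole line.

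The one under-specified point is surjectivity. The paper is more constructive here: it solves the finite-time boundary value problem \eqref{3.T} with $Q_Nu(-T)=0$, proving that the finite-dimensional map $S_T$ is invertible via cone-based injectivity, the properness estimate \eqref{3.cc} and Brouwer's invariance of domain, and then obtains the $T\to\infty$ limit as a Cauchy limit directly from the squeezing property \eqref{3.good}, so no compactness extraction is needed and uniqueness of the limit trajectory comes for free. Your ``standard shooting/continuity argument in the finite-dimensional variable $P_Nu_n(-n)$'' does not suffice as stated: in a base space of dimension greater than one, continuity alone (intermediate-value-type shooting) gives no surjectivity, and one must invoke invariance of domain or degree theory. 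This is a local repair rather than a structural flaw, since the needed ingredients — injectivity of the finite-time map from the cone property (the $Q_N$-difference vanishes at $t=-n$, so the difference starts inside the cone) and properness from \eqref{3.cc} together with Lemma \ref{Lem5.Q} — are already present in your plan; with that fixed, your compactness-based limiting argument also goes through, though the paper's squeezing-based Cauchy argument is cleaner. Your diagnosis that the finite-difference inequality \eqref{2.cccone}, and not its infinitesimal version, is what legitimizes passing from floating invariant cones to an actual manifold coincides with the paper's own emphasis.
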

\begin{proof} The proof of this theorem is almost identical to Theorem 2.23 of \cite{Z14}, so we only indicate the main steps of this proof for the convenience of the reader.
\par
{\it Step 1. Backward in time solutions: finite-time.} Let us fix some $T>0$ and  consider the following boundary value problem associated with \eqref{1.cglm}:
\begin{equation}\label{3.T}
\Dt u+(1+i\omega)Au=F(u),\ \ P_Nu=u_+^0,\ \ \ Q_Nu(-T)=0.
\end{equation}
We claim that this problem is uniquely solvable for any $u_+^0\in H_+$. Indeed, let us consider the map $S_T:H_+\to H_+$ defined by $S_T(u_+^T):=P_NS(T)(u_+^T,0)$, where $S(t)$ is a solution semigroup associated with equation \eqref{1.cglm}. This map is Lipschitz continuous and, moreover, due to Lemma \ref{Lem5.Q}, assumption \eqref{1.QIMs} is satisfied for any such a trajectory $u(t)=S(t+T)(u_+^T,0)$  (uniformly with respect to $T$). We also see that, for any such trajectories $u_1$ and $u_2$,
$$
u_1(-T)-u_2(-T)\in\Cal K^+_{u_1(-T),u_2(-T)}
$$
and, therefore, by the cone property, $u_1(t)-u_2(t)\in\Cal K^+_{u_1(t),u_2(t)}$ for all $t\in[-T,0]$. This, in particular, gives the injectivity of the map $S_T$ as well as the estimate
\begin{equation}\label{3.cc}
\|Q_N(u_1(t)-u_2(t))\|_H\le C\|P_N(u_1(t)-P_Nu_2(t))\|_H,
\end{equation}
due to the fact that $v$ to $z$ transform $\Cal Q(u_1,u_2)$ and its inverse are uniformly bounded.
After that, the fact that $S_T$ is invertible follows from \eqref{3.cc} and Browder open domain theorem exactly as in \cite{Z14}.
\par
{\it Step 2. Passing to the limit $T\to\infty$.}  Let us denote $u_{u_+^0}(t)$ the unique solution of problem \eqref{3.T} and define
\begin{equation}\label{3.tr}
u_{u_+^0}(t):=\lim_{T\to\infty}u_{T,u_+}(t).
\end{equation}
To verify that this limit exists, we use the squeezing property. Indeed, let $T_1>T_2$ and
$v(t):=u_{T_1,u_+^0}(t)-u_{T_2,u_+^0}(t)$. Then, since $P_Nv(0)=0$, by the cone property, we have $v(0)\notin \Cal K_{u_1(0),u_2(0)}^+$ (there is nothing to prove in the case when $Q_Nv(0)=0$) and therefore $v(t)\notin\Cal K^+_{u_1(t),u_2(t)}$. Thus, by the squeezing property,
\begin{multline}\label{3.good}
\|v(t)\|_H\le Ce^{-\gamma(t+T_2)}\|v(T_2)\|_H\le \\\le Ce^{-\gamma(t+T_2)}(\|P_Nv(T_2)\|_H+\|Q_Nv(T_2)\|_{H})\le \\\le C_1e^{-\gamma(t+T_2)}\|Q_Nv(T_2)\|_H\le C_2e^{-\gamma(t+T_2)},
\end{multline}
where we have used the fact that $u_1$ and $u_2$ satisfy $\eqref{1.QIMs}$ and the inequality $\|P_Nv(t)\|_H\le C\|Q_Nv(t)\|_H$ which follows from the cone condition. Estimate \eqref{3.good} shows that $u_{T,u_+^0}(t)$ is a Cauchy sequence in $H$ for any fixed $t$ when $T\to\infty$. This proves the existence of the limit trajectory $u_{u_+^0}(t)$.
\par
The trajectory is unique in the class of backward solutions  satisfying the determining condition:
\begin{equation}\label{3.det}
\limsup_{t\to-\infty}\|Q_Nu(t)\|_H<\infty.
\end{equation}
Indeed, let $u_1(t)$ and $u_2(t)$ be two different trajectories such that $P_Nu_1(0)=P_Nu_2(0)$ and \eqref{3.det} is satisfied for both of them. Then, obviously, $u_1(t)-u_2(t)\notin\Cal K^+_{u_1(t),u_2(t)}$ for all $t\le0$ and therefore the squeezing property gives estimate \eqref{3.good} for any $T_2>0$. Passing to the limit $T_2\to\infty$, we see that $u_1\equiv u_2$.
\par
{\it Step3. Construction of the manifold.} We define the set $\Cal M$ by \eqref{3.rep}. Then the strict invariance of it is obvious. Moreover, due to Step 2, the trajectories $u(t)$ belonging to $\Cal M$ are uniquely determined by the values $P_Nu(0):=u_+^0\in H_+$. Thus, $\Cal M$ can be presented
in  the form of \eqref{3.graph} with $M(u_+^0):=Q_Nu_{u_+^0}(0)$. Let us check that $M$ is Lipschitz continuous. Indeed, let $u_i(t):=u_{u_i^0}(t)$, $u_i^0\in H_+$, $i=1,2$ be two trajectories on the manifold $\Cal M$ and let $u_{i,T}(t)$ be their approximations on the interval $[-T,0]$ constructed in Step1. Then, obviously, $u_{1,T}(-T)-u_{2,T}(-T)\in\Cal K^+_{u_{1,T}(-T),u_{2,T}(-T)}$ and, therefore, $u_{1,T}(t)-u_{2,T}(t)\in\Cal K^+_{u_{1,T}(t),u_{2,T}(t)}$ for all $t\in[-T,0]$. Passing to the limit $T\to\infty$ and using the continuous dependence of $\Cal K^+_{u_1,u_2}$ on $u_1,u_2$, we arrive at
$$
u_{u_1^0}(t)-u_{u_2^0}(t)\in\Cal K^+_{u_{u^0_1}(t),u_{u^0_2}(t)}
$$
for all $t\le0$. In particular, this property at $t=0$ gives
\begin{multline*}
\|M(u_1^0)-M(u_2^0)\|_H=\|Q_N(u_{u_1^0}(0)-u_{u_2^0}(0))\|_H\le\\\le C\|P_N(u_{u_1^0}(0)-u_{u_2^0}(0))\|_H=C\|u_1^0-u_2^0\|_H
\end{multline*}
and the desired Lipschitz continuity is proved.
\par
{\it Step 4. Exponential tracking.} Let $u(t)$, $t\ge0$ be an arbitrary trajectory of \eqref{1.cglm}. Then, due to Lemma \ref{Lem5.Q}, we may assume without loss of generality that $u(t)$ satisfies \eqref{1.QIMs} for all $t\ge0$. Let us fix $T>0$ and consider the trajectory $\bar u_T(t)\in\Cal M$ which satisfies the condition $P_N(u(T)-\bar u_T(T))=0$. Such a trajectory exists by the construction of $\Cal M$. Then, since $u(T)-\bar u_T(T)\notin\Cal K^+_{u(T),\bar u_T(T)}$, we conclude from the squeezing property that
\begin{equation}\label{3.tr-app}
\|u(t)-\bar u(t)\|_H\le Ce^{-\gamma t}\|u(0)-\bar u_T(0)\|_H,\ \ t\in[0,T].
\end{equation}
We want to construct the desired trace $\bar u(t)$ by passing to the limit $T\to\infty$. Due to compactness arguments, it is enough to verify that the norms $\|\bar u_T(0)\|_H$ remain bounded. Indeed, due to the cone property and assumption \eqref{1.QIMs}
$$
\|u_T(0)\|_H\le\|u(0)-u_T(0)\|_H+C\le C(\|Q_N(u(0)-u_T(0))\|_H+1)\le C_1
$$
and the exponential tracking is verified, see \cite{Z14} for more details. Thus, the theorem is proved.
\end{proof}
\begin{remark}We see that, indeed, the "floating" invariant cones $\Cal K^+_{u_1,u_2}$ which depend on two points of the phase space and/or the corresponding differential cone inequality \eqref{2.cccone} allows us to verify the existence of IMs exactly in the same way as in the case of one cone independent of the points of the phase space. Crucial for this proof is the fact that the linear maps $\Cal Q(u_1,u_2)$ which reduce the floating cones to non-floating ones are uniformly bounded. This allows us to ignore this dependence in most parts of the proof given above. It would be interesting to extend the IM existence theorem to more general classes of floating cones, e.g. in the case where the projectors $P_N=P_N(u_1,u_2)$ depend also on the points of the phase space.
\par
We would like to emphasize the analogy of the representation formula \eqref{3.rep} for IMs with the analogous formula for global attractors:
\begin{equation}
\Cal A=\{u(0):\ \ u(t),\  t\in\R, \ \text{ solves \eqref{1.cglm}}\ \ \limsup_{t\to-\infty}\|u(t)\|_H<\infty\}.
\end{equation}
We do not know how deep this analogy is, but believe that it may be useful.
\end{remark}
We conclude this section by recalling that the constructed IM $\Cal M$ is actually $C^{1+\eb}$-smooth for some small positive $\eb=\eb(N)$. This fact follows from the strong cone property for equation of variations \eqref{2.var} exactly as in \cite{KZ15}. It is not difficult to see that the fact that our invariant cones are now floating does not make any essential difference in the proof given there. To get this results the assumption that
\begin{equation}\label{3.dif}
\|F(u_1)-F(u_2)-F'(u_1)(u_1-u_2)\|_H\le C\|u_1-u_2\|_{H}^{1+\eb}
\end{equation}
(which is equivalent to $C^{1+\eb}$-smoothness of $F$) is used there. In our case, $F(u)$ is only Hadamard differentiable and not even Frechet differentiable due to the structure of the cut-off function $W(u)$. This Hadamard differentiability is enough to verify the Frechet differentiability of the solution semigroup $S(t)$, but is not sufficient to get its $C^{1+\eb}$-differentiability. To overcome this difficulty, we use instead of \eqref{3.dif} its weaker version
\begin{equation}\label{3.dif-w}
\|F(u_1)-F(u_2)-F'(u_1)(u_1-u_2)\|_H\le C\|u_1-u_2\|_{H^s}^{\eb}\|u_1-u_2\|_H,
\end{equation}
where the exponent $s$ is fixed in section \ref{s1}.  This property can be easily derived from \eqref{1.W-sm}, see \cite{FCH} for more details. In the proof of differentiability of the function $M$ which generates the IM given in \cite{KZ15}, estimate \eqref{3.dif} is used in the situation when $u_1(t)$ and $u_2(t)$ are two solutions belonging to the IM and only in combination with the obvious backward Lipschitz continuity estimate
\begin{equation}\label{3.lip}
\|u_1(t)-u_2(t)\|_H\le Ce^{-K_N t}\|P_Nu_1(0)-P_Nu_2(0)\|_{H},\ \ \ t\le0.
\end{equation}
However, due to the parabolic smoothing property, estimate \eqref{3.lip} implies a stronger estimate
\begin{equation}\label{3.lips}
\|u_1(t)-u_2(t)\|_{H^s}\le C_1e^{-K_N t}\|P_Nu_1(0)-P_Nu_2(0)\|_{H},\ \ \ t\le0
\end{equation}
and by this reason there is no more difference between \eqref{3.dif} and \eqref{3.dif-w}, so the $C^{1+\eb}$-regularity of $M$ follows indeed exactly as in \cite{KZ15}, see also \cite{FCH}. By this reason, we only state the corresponding result as a theorem below leaving the detailed proof of it to the reader.
\begin{theorem}Let the assumptions of Theorem \ref{Th3.main} hold. Then the Lipschitz IM $\Cal M$ constructed there is actually $C^{1+\eb}$-smooth for some small positive $\eb=\eb(N)>0$.
\end{theorem}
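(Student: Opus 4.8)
The plan is to upgrade the Lipschitz manifold $\Cal M$ to a $C^{1+\eb}$-manifold by proving the corresponding regularity of the graph function $M\colon H_+\to H_-$, following the scheme of \cite{KZ15}. The final theorem to be proven asserts this $C^{1+\eb}$-smoothness, and the key technical input is that $F$ satisfies the weak modulus-of-continuity estimate \eqref{3.dif-w} rather than the genuine $C^{1+\eb}$ estimate \eqref{3.dif}, the latter failing because $W(u)$ (and hence $F$) is only Hadamard differentiable. First I would recall the structure of the differentiability argument of \cite{KZ15}: one shows that the candidate derivative $M'(u_+^0)$ of the graph map is the one furnished by the equation of variations \eqref{2.var} restricted to the manifold, and then one estimates the remainder $M(u_1^0)-M(u_2^0)-M'(u_1^0)(u_1^0-u_2^0)$ in the $H_-$-norm by a quantity of order $\|u_1^0-u_2^0\|_H^{1+\eb}$. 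The cone/squeezing machinery of Corollary \ref{Cor2.cone} and the Lemma preceding Theorem \ref{Th3.main} reduce this to controlling, along backward trajectories on $\Cal M$, the difference between the true difference $u_1(t)-u_2(t)$ and its linearization $v(t)$ solving \eqref{2.var}.

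Next I would carry out the replacement of \eqref{3.dif} by \eqref{3.dif-w} inside this estimate. The crucial observation, already flagged in the text, is that \eqref{3.dif} is invoked in \cite{KZ15} \emph{only} for pairs $u_1(t),u_2(t)$ lying on the IM and \emph{only} in conjunction with the backward exponential contraction \eqref{3.lip}, which in turn comes from the cone property. Here I would insert the parabolic-smoothing step: since the nonlinearity $F$ is bounded from $H$ into $H^{s_0}\subset H$ and the equation \eqref{1.cglm} is parabolic, the backward estimate \eqref{3.lip} in the $H$-norm self-improves, for $t\le 0$ bounded away from the endpoint, to the stronger $H^s$-norm estimate \eqref{3.lips} with the same exponential rate $e^{-K_N t}$. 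Feeding \eqref{3.lips} into \eqref{3.dif-w} then gives
\begin{equation*}
\|F(u_1(t))-F(u_2(t))-F'(u_1(t))(u_1(t)-u_2(t))\|_H\le C e^{-(1+\eb)K_N t}\|P_Nu_1(0)-P_Nu_2(0)\|_H^{1+\eb},
\end{equation*}
which is exactly the bound one would have obtained from the genuine estimate \eqref{3.dif}. Thus the remainder term in the difference-quotient analysis is controlled with the required Hölder exponent, and the rest of the argument of \cite{KZ15} transfers verbatim.

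I would then remark explicitly that the only structural novelty of the present setting, namely that the invariant cones $\Cal K^+_{u_1,u_2}$ are \emph{floating} (depend on the base points through the transform $\Cal Q(u_1,u_2)$), does not affect this regularity argument. The point is that $\Cal Q(u_1,u_2)$ and its inverse are uniformly bounded and depend continuously on $(u_1,u_2)$, so every place in \cite{KZ15} where the fixed cone $\Cal K^+$ enters can be replaced by $\Cal K^+_{u_1,u_2}$ with only a bounded distortion of constants; in particular the backward Lipschitz estimate \eqref{3.lip}, being derived from \eqref{2.cccone} through the uniformly bounded $\Cal Q$, is unaffected. Therefore $M\in C^{1+\eb}$ with $\eb=\eb(N)>0$, completing the proof.

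The main obstacle I anticipate is precisely the self-improving smoothing step that turns \eqref{3.lip} into \eqref{3.lips}: one must verify that the $H\to H^s$ parabolic regularization gain is compatible with the backward direction of time and that the factor $t^{-\alpha}$ type singularity at the terminal time is harmless because the estimate is only needed on intervals of the form $t\le -\delta<0$ (or is integrated against the exponential), together with a short-time bootstrap near the endpoint. Once this interpolation/smoothing point is cleanly handled, the substitution of \eqref{3.dif-w} for \eqref{3.dif} is purely formal and the floating-cone bookkeeping is routine given the uniform bounds on $\Cal Q(u_1,u_2)$.
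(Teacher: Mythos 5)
Your proposal is correct and coincides with the paper's own argument: like the paper, you transfer the $C^{1+\eb}$-regularity proof of \cite{KZ15}, substitute the weaker estimate \eqref{3.dif-w} (forced by the mere Hadamard differentiability of $W$, hence of $F$) for the genuine $C^{1+\eb}$-bound \eqref{3.dif}, upgrade the backward Lipschitz estimate \eqref{3.lip} to the $H^s$-estimate \eqref{3.lips} by parabolic smoothing (unproblematic since trajectories on $\Cal M$ are complete, so the smoothing interval stays inside the domain of definition), and observe that the floating cones cause no change because $\Cal Q(u_1,u_2)$ and its inverse are uniformly bounded. This is essentially the same proof the paper sketches before stating the theorem.
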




 \end{document}